\declaretheorem[name=Lemma, numberwithin = section]{lemma}
\declaretheorem[name=Theorem,sibling = lemma]{theorem}
\declaretheorem[name=Claim]{claim}
\crefname{claim}{Claim}{Claims}
\crefname{lemma}{Lemma}{Lemmas}
\crefname{theorem}{Theorem}{Theorems}
\crefname{proposition}{Proposition}{Propositions}
\crefname{question}{Question}{Questions}
\crefname{definition}{Definition}{Definitions}
\crefname{conjecture}{Conjecture}{Conjectures}
\crefname{corr}{Corollary}{Corollaries}
\crefname{remark}{Remark}{Remarks}
\newenvironment{subproof}[1][\proofname]{%
  \begin{proof}[#1]%
}{%
  \end{proof}%
}
\newcommand{\C}{\mathcal{C}}
\newcommand{\ex}{\text{ex}}
\begin{document}

\begin{frontmatter}[classification=text]

\title{On Polynomial Degree-Boundedness} 

\author[rb]{Romain Bourneuf }
\author[mb]{Matija Buci\'c \thanks{Research supported in part by an NSF Grant DMS--2349013.}}
\author[lc]{Linda Cook}

\author[jd]{James Davies}

\begin{abstract}
 We prove a conjecture of Bonamy, Bousquet, Pilipczuk, Rzążewski, Thomassé, and Walczak, that for every graph $H$, there is a polynomial $p$ such that for every positive integer $s$, every graph of average degree at least $p(s)$ contains either $K_{s,s}$ as a subgraph or contains an induced subdivision of $H$.
    This improves upon a result of Kühn and Osthus from 2004 who proved it for graphs whose average degree is at least triply exponential in $s$ and a recent result of Du, Gir\~{a}o, Hunter, McCarty and Scott for graphs with average degree at least singly exponential in $s$.

     As an application, we prove that the class of graphs that do not contain an induced subdivision of $K_{r,t}$ is polynomially $\chi$-bounded. In the case of $K_{2,3}$, this is the class of induced theta-free graphs, and answers a question of Davies.
     Along the way, we also answer a recent question of McCarty, by showing that if $\mathcal{G}$ is a hereditary class of graphs for which there is a polynomial $p$ such that every \emph{bipartite} $K_{s,s}$-subgraph-free graph in $\mathcal{G}$ has average degree at most $p(s)$, then more generally, there is a polynomial $p'$ such that every $K_{s,s}$-subgraph-free graph in $\mathcal{G}$ has average degree at most $p'(s)$.
     Our main new tool is an induced variant of the K\H{o}v\'ari-S\'os-Tur\'an theorem, which we find to be of independent interest. 
\end{abstract}
\end{frontmatter}

\section{Introduction}

For graphs $G,H$, we say that $G$ is \emph{$H$-subgraph-free} if $G$ does not contain $H$ as a (not necessarily induced) subgraph. 
A \emph{subdivision} of a graph $H$ is any graph obtained from $H$ by replacing edges by (possibly) longer paths. 
In 2004, Kühn and Osthus \cite{Kuhn_Osthus00} proved that for every graph $H$, there is a function $f$ such that every $K_{s,s}$-subgraph-free graph with no induced subdivision of $H$ has average degree at most $f(s)$.
Their function $f$ was a triple exponential in $s$ and they asked for improved bounds in \cite{Kuhn_Osthus00}. 
Recently, this was improved to a single exponential bound by Du, Gir\~{a}o, Hunter, McCarty and Scott \cite{du2023induced}.
In 2022, Bonamy, Bousquet, Pilipczuk, Rzążewski, Thomassé, and Walczak \cite{BONAMY2022353} conjectured that $f$ can be taken to be a polynomial in $s$.
In the same paper, Bonamy, Bousquet, Pilipczuk, Rzążewski, Thomassé, and Walczak \cite{BONAMY2022353} proved their conjecture when $H$ is a path or a cycle.
Subsequently, Scott, Seymour, and Spirkl \cite{Scottpoly1} proved the conjecture in the case where $H$ is a tree.
The main result of this paper is a full proof of the conjecture of Bonamy, Bousquet, Pilipczuk, Rzążewski, Thomassé, and Walczak \cite{BONAMY2022353}. 

 \begin{theorem}\label{thm:poly-kuhn-osthus}
     For every graph $H$, there is a polynomial $p$, such that every $K_{s,s}$-subgraph-free graph with no induced subdivision of $H$ has average degree at most $p(s)$.
 \end{theorem}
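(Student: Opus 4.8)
The plan is to combine two reductions with one new extremal ingredient, an induced version of the K\H{o}v\'ari-S\'os-Tur\'an theorem. First, since the class of graphs with no induced subdivision of $H$ is hereditary, we reduce to \emph{bipartite} graphs by the transference result (answering a question of McCarty) proved elsewhere in the paper: it suffices to produce a polynomial $p$ such that every bipartite $K_{s,s}$-subgraph-free graph with no induced subdivision of $H$ has average degree at most $p(s)$. Second, we replace $H$ by a complete bipartite graph. Suppose $t \ge \binom{|V(H)|}{2}$ and $T$ is a \emph{proper} induced subdivision of $K_{t,t}$ --- one in which every branch-path has length at least two --- with branch vertices $a_1,\dots,a_t$ on one side and $b_1,\dots,b_t$ on the other. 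Identifying $V(H)$ with $a_1,\dots,a_{|V(H)|}$, we realise each edge $uw$ of $H$ by a detour that leaves $a_u$ along the branch-path to some $b_\ell$, passes through $b_\ell$, and returns to $a_w$ along the branch-path from $b_\ell$, using a distinct $\ell$ for each edge of $H$ (possible since $|E(H)| \le \binom{|V(H)|}{2} \le t$). Because $T$ is induced and proper, each $a_i$ is non-adjacent to each $b_\ell$; and if moreover all the $a_i$ lie in one part of the bipartition and all the $b_\ell$ in the other --- which the construction below will arrange --- then the $a_i$ are pairwise non-adjacent and so are the $b_\ell$, and the chosen vertices induce precisely a subdivision of $H$. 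It therefore suffices to find, in every bipartite $K_{s,s}$-subgraph-free graph $G$ of average degree at least $p(s)$, such a proper induced subdivision of $K_{t,t}$.

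For this core statement, let $(X,Y)$ be the bipartition of $G$. We first pass to a subgraph of minimum degree at least $p(s)/2$ and then, by the Koml\'os--Szemer\'edi theorem on expander subgraphs, to a bipartite $K_{s,s}$-subgraph-free expander subgraph $G'$ of minimum degree still polynomial in $s$; then we build the subdivision greedily, choosing branch vertices $a_1,\dots,a_t \in X$ and $b_1,\dots,b_t \in Y$ and, for each pair $(i,j)$, an internally disjoint path from $a_i$ to $b_j$, always keeping the invariant that the partial structure $S$ built so far is induced. Bipartiteness does part of the work for free: a vertex on a path and a vertex of $S$ that lie in the \emph{same} part of $(X,Y)$ are automatically non-adjacent, so the adjacencies we must actively forbid are only those between a path vertex and a vertex of $S$ in the \emph{opposite} part. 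Even so, routing the next path forces us to work in $G'$ after deleting $V(S)$ together with a portion of its neighbourhood, and in a dense graph that neighbourhood can cover a constant fraction of all vertices --- this is exactly the obstruction that drove the proofs of Kühn and Osthus and of Du, Gir\~{a}o, Hunter, McCarty and Scott to triply- and singly-exponential bounds.

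The induced K\H{o}v\'ari-S\'os-Tur\'an theorem is the tool designed to defeat this obstruction: it bounds, polynomially in $s$, the ``spread'' of bounded vertex sets in a $K_{s,s}$-subgraph-free graph, so that within the expanding part the greedy routing can always dodge the dangerous neighbourhoods while losing only a polynomial factor. One half of the relevant estimate is elementary --- since $G$ is $K_{s,s}$-subgraph-free, the number of vertices with $s$ or more neighbours in a fixed set $W$ is at most $s\,|W|^{s}$, obtained by summing over the $s$-subsets of $W$ their fewer-than-$s$ common neighbours --- so the real content is to handle, at the same time, the vertices with between $1$ and $s-1$ neighbours in $W$; earlier arguments did this by an iteration losing a factor per round, whereas the new theorem does it in a single step. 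The main obstacle, I expect, is precisely this: formulating and proving the induced K\H{o}v\'ari-S\'os-Tur\'an theorem --- an extremal statement of independent interest --- and then organising the greedy construction around it so that the final bound on the average degree is a genuine polynomial in $s$.
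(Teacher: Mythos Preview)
Your two outer reductions are fine: the McCarty-type transference (Theorem~\ref{thm:McCarty}) legitimately cuts to bipartite hosts, and your embedding of an induced subdivision of $H$ inside a proper induced subdivision of $K_{t,t}$ with branch vertices split by the bipartition is correct (the paper uses the analogous embedding inside a proper induced subdivision of $K_{|H|}$). The gap is in the core step, where you propose to route the branch paths greedily inside a Koml\'os--Szemer\'edi expander and invoke the induced K\H{o}v\'ari--S\'os--Tur\'an theorem to ``bound, polynomially in $s$, the spread of bounded vertex sets''. Theorem~\ref{thm:induced-turan-type} does not do this. It bounds the \emph{edge count} of an $n$-vertex $K_{s,s}$-free graph with no induced copy of a fixed bipartite graph $H'$ by $c_{H'}s^4 n^{2-\varepsilon_{H'}}$; it says nothing about $|N(W)|$, which can be a constant fraction of $n$ even when $|W|=1$. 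Your sentence ``the real content is to handle the vertices with between $1$ and $s{-}1$ neighbours in $W$; \dots\ the new theorem does it in a single step'' misreads the statement: those vertices are simply not controlled. A second mismatch is the hypothesis itself: Theorem~\ref{thm:induced-turan-type} assumes no induced copy of one fixed bipartite $H'$, whereas your host only excludes induced \emph{subdivisions} of $H$, and the latter does not supply any single forbidden bipartite induced subgraph. So as written the expander-routing plan has no leverage.

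The paper never attempts a direct greedy construction. Instead it applies a regularisation lemma to an induced subgraph realising the maximum average degree $d$, splitting into an almost-regular case and a highly unbalanced bipartite case. In both cases Theorem~\ref{thm:induced-turan-type} is used \emph{locally on neighbourhoods}, and with an auxiliary $H'$ chosen to be any $C_4$-free bipartite graph of average degree $\ge k$ (e.g.\ a projective-plane incidence graph): if $G$ contains $H'$ induced we are already in the first outcome of Theorem~\ref{thm:bipartite-degree-boundedness}, so we may assume it does not and then apply the theorem. In the almost-regular case it bounds the number of $C_3$'s and $C_4$'s through each edge, so random deletion yields an induced girth-$\ge 5$ subgraph of average degree $\ge \tfrac12\Delta^{\varepsilon_{H'}/10}$; Kwan--Letzter--Sudakov--Tran then produces a bipartite induced subgraph, and the constant-degree $C_4$-free case of K\"uhn--Osthus finishes. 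In the unbalanced case it is used inside each $G[N(a)]$ to extract an independent set of size $\Omega(d^{\varepsilon_{H'}/2}/s^4)$, feeding into K\"uhn--Osthus's Corollary~19, which returns an induced $1$-subdivision of some $F$ with $d(F)\ge \beta^9/2^{14}$; Bollob\'as--Thomason/Koml\'os--Szemer\'edi applied to $F$ then gives a subdivision of $K_{|H|}$, hence an induced proper subdivision of $K_{|H|}$ in $G$. The point is that the induced KST is a density-killer inside neighbourhoods, not a spread-controller for routing; your plan puts it in the wrong slot.
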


\cref{thm:poly-kuhn-osthus} has an application to polynomial $\chi$-boundedness.
A class of graphs $\mathcal{G}$ is (polynomially) $\chi$-bounded if there exists a (polynomial) function $f: \mathbb{N} \to \mathbb{N}$ such that $\chi(G)\le f(\omega(G))$ for every $G\in \mathcal{G}$ (where $\chi(G)$ and $\omega(G)$ denote respectively the chromatic number and the clique number of $G$).
Note that, in a recent breakthrough Bria\'nski, the fourth author, and Walczak \cite{BDW2022} proved that there are $\chi$-bounded classes of graphs that are not polynomially $\chi$-bounded using a construction of Carbonero, Hompe, Moore and Spirkl \cite{triangle-free-construction}. 
Scott \cite{Scott97} conjectured that for every graph $H$, the class of graphs with no induced subdivision of $H$ is $\chi$-bounded.
This conjecture is known to be true when $H$ is a tree \cite{Scott97}, a cycle \cite{chudnovsky2017induced} or more generally a banana tree \cite{SCOTT2020487}.
However, Scott's \cite{Scott97} conjecture is false in general as shown by Pawlik, Kozik, Krawczyk, Laso{\'{n}}, Micek, Trotter, and Walczak \cite{PKKLMTW14}, and now many counter-examples are known \cite{chalopin2016restricted,POURNAJAFI2024103850}, including rather small graphs such as $K_5$ \cite{POURNAJAFI2024103850}.
Kühn and Osthus's \cite{Kuhn_Osthus00} theorem implies that the class of graphs not containing an induced subdivision of $K_{r,t}$ is $\chi$-bounded. Similarly, \cref{thm:poly-kuhn-osthus} implies polynomial $\chi$-boundedness for this class.

\begin{restatable}{theorem}{polychibdd}\label{thm:chi-bound}
    For every pair of positive integers $r,t$, the class of graphs not containing an induced subdivision of $K_{r,t}$ is polynomially $\chi$-bounded.
\end{restatable}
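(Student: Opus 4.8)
The plan is to derive \cref{thm:chi-bound} from \cref{thm:poly-kuhn-osthus} by a short Ramsey argument. Fix positive integers $r,t$, put $m=\max\{r,t\}$, and let $\mathcal{G}$ denote the class of graphs with no induced subdivision of $K_{r,t}$; this class is hereditary. It suffices to find a polynomial $d$ such that every graph in $\mathcal{G}$ with clique number at most $\omega$ has average degree at most $d(\omega)$: since $\mathcal{G}$ is closed under induced subgraphs, this implies that every $G\in\mathcal{G}$ with $\omega(G)=\omega$ is $\lfloor d(\omega)\rfloor$-degenerate, hence $\chi(G)\le d(\omega)+1$, which is polynomial in $\omega(G)$.

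First I would show that every $G\in\mathcal{G}$ with $\omega(G)\le\omega$ is $K_{s_0,s_0}$-subgraph-free, where $s_0=s_0(\omega):=R(\omega+1,m)$ is a Ramsey number. Suppose not, and let $A,B$ be the parts of a $K_{s_0,s_0}$ subgraph, each of size $s_0$. Since $G[A]$ has $s_0=R(\omega+1,m)$ vertices and no clique of size $\omega+1$, it contains an independent set $A'$ of size $m$; similarly $G[B]$ contains an independent set $B'$ of size $m$. Then $G[A'\cup B']$ is exactly $K_{m,m}$, which contains $K_{r,t}$ as an induced subgraph; as $K_{r,t}$ is (trivially) a subdivision of $K_{r,t}$, this contradicts $G\in\mathcal{G}$. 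For fixed $m$ the bound $s_0(\omega)\le\binom{\omega+m-1}{m-1}$ is polynomial in $\omega$.

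Finally, I would apply \cref{thm:poly-kuhn-osthus} with $H=K_{r,t}$ to obtain a polynomial $p$ such that every $K_{s,s}$-subgraph-free graph with no induced subdivision of $K_{r,t}$ has average degree at most $p(s)$; taking $s=s_0(\omega)$ shows that every $G\in\mathcal{G}$ with $\omega(G)\le\omega$ has average degree at most $d(\omega):=p(s_0(\omega))$, a polynomial in $\omega$, which completes the argument by the reduction above. Given \cref{thm:poly-kuhn-osthus}, the deduction is routine; the only points requiring a little care are the observation that a large induced complete bipartite graph contains an induced (trivially subdivided) copy of $K_{r,t}$, and that $R(\omega+1,m)$ is polynomially bounded in $\omega$ for fixed $m$. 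Of course, essentially all the difficulty is in establishing \cref{thm:poly-kuhn-osthus} itself.
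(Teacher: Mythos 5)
Your proposal is correct and takes essentially the same approach as the paper: the paper likewise reduces to \cref{thm:poly-kuhn-osthus} by showing that a graph of clique number $k$ with a $K_{x,x}$ subgraph for $x \ge R(k+1,t)$ must contain an induced $K_{t,t}$, hence an induced $K_{r,t}$, using the Erd\H{o}s--Szekeres bound to make $x$ polynomial in $k$. The only cosmetic difference is that you set $m=\max\{r,t\}$ whereas the paper assumes $r\le t$ without loss of generality; the Ramsey argument and the final appeal to degeneracy are identical.
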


Of particular interest is the case $s=2$, $t=3$. A graph is a \emph{theta} if it is a subdivision of $K_{2,3}$, and a graph is \emph{induced theta-free} if it contains no induced theta. \cref{thm:chi-bound}, implies that induced theta-free graphs are polynomially $\chi$-bounded, which answers a question raised by Davies \cite{davieswheel}.
See \cite{SS2018} for a wonderful 2020 survey on $\chi$-boundedness by Scott and Seymour.

McCarty \cite{mccarty2021} proved that if $\mathcal{G}$ is a hereditary class of bipartite graphs whose $C_4$-subgraph-free graphs have bounded average degree, then there is a function $f$ such that every $K_{s,s}$-subgraph-free graph in $\mathcal{G}$ has average degree at most $f(s)$.
Along the way to proving \cref{thm:poly-kuhn-osthus}, we answer a recent question of McCarty \cite{rose-matrix} by showing the following:
\begin{theorem}\label{thm:McCarty}
    Let $\mathcal{G}$ be a hereditary class of graphs for which there is a polynomial $p$ such that every \emph{bipartite} $K_{s,s}$-subgraph-free graph in $\mathcal{G}$ has average degree at most $p(s)$.
    Then, there is a polynomial $p'$ such that every $K_{s,s}$-subgraph-free graph in $\mathcal{G}$ has average degree at most $p'(s)$.
\end{theorem}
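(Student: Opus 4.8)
The plan is to reduce \cref{thm:McCarty} to the bipartite case by locating, inside any dense $K_{s,s}$-subgraph-free graph in $\mathcal{G}$, a bipartite induced subgraph of comparable density, to which the hypothesis then applies.

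Let $G\in\mathcal G$ be $K_{s,s}$-subgraph-free. First pass to a non-empty induced subgraph $G_1$ of $G$ with minimum degree at least $\delta:=\bar d(G)/2$ by repeatedly deleting vertices of degree less than $\delta$, and then to a connected component of $G_1$; since $\mathcal G$ is hereditary, $G_1\in\mathcal G$, and $G_1$ is still $K_{s,s}$-subgraph-free with minimum degree at least $\delta$. It now suffices to find an induced bipartite subgraph $B$ of $G_1$ with $\bar d(B)\ge \delta/q(s)$ for some fixed polynomial $q$: since $B\in\mathcal G$ is bipartite and $K_{s,s}$-subgraph-free, the hypothesis gives $\bar d(B)\le p(s)$, and hence $\bar d(G)=2\delta\le 2\,q(s)\,p(s)=:p'(s)$.

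To produce $B$, I would run a breadth-first search in $G_1$ from an arbitrary vertex, obtaining layers $L_0,L_1,\dots,L_k$ in which every edge lies inside one layer or between two consecutive layers. The key structural remark is that for any choice of independent sets $S_i\subseteq L_i$, the set $\bigcup_i S_i$ induces a bipartite subgraph of $G_1$ (two-coloured by the parity of the layer), since it has no edge inside a layer and no edge between layers of the same parity. Consequently, if only a small fraction of the edges of $G_1$ lie inside layers, then the layers are sparse on average, have large independent sets, and choosing such $S_i$ carefully yields an induced bipartite subgraph retaining a polynomial-in-$s$ fraction of the edges; whereas if a constant fraction of the edges lie inside layers, some layer $L_i$ carries many of them, and since $G_1[L_i]$ is again $K_{s,s}$-subgraph-free we may recurse into it. The losses incurred in each of these steps, and in particular the guarantee that a $K_{s,s}$-subgraph-free layer which is not sparse nonetheless contains a \emph{dense bipartite induced subgraph}, are supplied by an induced version of the K\H{o}v\'ari--S\'os--Tur\'an~theorem.

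The main obstacle is precisely this core extraction: a $K_{s,s}$-subgraph-free graph can have chromatic number growing with its order, so one cannot simply two-colour; its dense structure can be concentrated in a single enormous layer, so a bounded-depth recursion is not for free; and, unlike in the bipartite setting, a copy of $K_{s,s}$ as a subgraph need not be induced, so extra edges can hide every clean bipartite pattern. Making the induced K\H{o}v\'ari--S\'os--Tur\'an~theorem strong enough to overcome all three issues with total loss polynomial in $s$ (and independent of the number of vertices) is the heart of the matter; once it is available, the layering together with the remark above, plugged into the reduction of the second paragraph, finishes the proof.
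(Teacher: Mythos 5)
Your high-level strategy — locate inside a dense $K_{s,s}$-subgraph-free $G\in\mathcal G$ an induced bipartite subgraph of average degree polynomial in $s$ relative to $\bar d(G)$, and then apply the hypothesis to that subgraph — is exactly the reduction the paper makes (it is the content of \cref{thm:bipartite-degree-boundedness}, of which \cref{thm:McCarty} is a short corollary). However, the mechanism you propose for producing that bipartite subgraph, a BFS layering followed by selecting an independent set $S_i$ in each layer, has gaps that the induced K\H{o}v\'ari--S\'os--Tur\'an~theorem alone cannot fill, and the paper's extraction argument runs along genuinely different lines.

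Two concrete problems with the layering scheme. First, in the ``few edges inside layers'' branch you assert that since each $G_1[L_i]$ is sparse, it has a large independent set $S_i$, and that ``choosing such $S_i$ carefully'' retains a polynomial-in-$s$ fraction of the between-layer edges. Having a large independent set in each layer is not enough: an edge $uv$ with $u\in L_i$, $v\in L_{i+1}$ survives only if $u\in S_i$ \emph{and} $v\in S_{i+1}$, so the $S_i$ must be chosen coherently across consecutive layers, and nothing in a plain independent-set argument (greedy, Tur\'an, or otherwise) controls this joint event. A random choice of $S_i$ with density $\rho_i$ retains edges at rate $\rho_i\rho_{i+1}$, and there is no a priori lower bound on $\rho_i$ in terms of $s$ alone. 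Second, in the ``many edges inside layers'' branch, the number of BFS layers $k$ is not bounded — minimum degree $\delta$ and connectivity impose no upper bound on the diameter — so the best single layer may only contain a $1/k$-fraction of the intra-layer edges, not a constant fraction. Combined with the fact that the recursion you describe has no depth bound, the accumulated loss can be a function of $|G|$ rather than of $s$, which would leave the final bound non-polynomial.

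The paper avoids both of these issues by using a different dichotomy. Instead of BFS layering, \cref{lem:regular-or-bipartite} (a regularisation lemma in the tradition of Erd\H{o}s--Sauer and Janzer--Sudakov) produces either an almost-regular induced subgraph of comparable average degree, or a single very unbalanced vertex bipartition $A\sqcup B$ (with $|A|/|B|$ exponentially large in a power of $d$) carrying a constant fraction of the edges. In the almost-regular case, \cref{thm:induced-turan-type} is used via an alteration/subsampling argument (\cref{lem:balanced-case}) to pass to an induced subgraph of girth at least five, and then a result of Kwan--Letzter--Sudakov--Tran extracts a dense induced bipartite subgraph from this triangle-free graph. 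In the unbalanced case (\cref{lem:bipartite-case}), one cleans $A$ to have controlled degree into $B$, extracts an independent set in $A$ via degeneracy, and then subsamples $B$ at low density; \cref{thm:induced-turan-type} is invoked to find, inside each neighbourhood $N(a)$, an independent set of size polynomial in $d$ and $1/s^4$, which is what lets the degree on the $A$-side be fixed at $\lfloor d^\varepsilon/s^4\rfloor$. Neither branch resembles a layer-by-layer independent-set selection, and both crucially control the loss in terms of $s$ and $d$ only. You have correctly identified the key new tool, but the argument from it to a bipartite core needs the regularisation dichotomy and the two subsampling lemmas; the BFS approach as sketched does not supply them.
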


Our main new tool for proving \cref{thm:poly-kuhn-osthus} and \cref{thm:McCarty} is an extension of an induced variant of the classical K\H{o}v\'ari-S\'os-Tur\'an theorem \cite{KST54} which was recently proved by Loh, Tait, Timmons and Zhou~\cite{LTTZ18}. 

The forbidden subgraph problem is a central problem in extremal combinatorics, and it has been widely studied. 
It can be described as follows: Given a graph $H$, find the maximum number of edges $\ex(n, H)$ an $n$-vertex graph can have without containing $H$ as a subgraph. 
The classical Erd\H{o}s-Stone-Simonovits theorem \cite{ES46} states that the asymptotic behavior of $\ex(n, H)$ is determined by $\chi(H)$ as follows: $\ex(n, H) = \left(1-\frac{1}{\chi(H)-1}\right) \binom{n}{2} + o(n^2)$. 
This solves the forbidden subgraph problem except for bipartite graphs, for which it implies $\ex(n, H) = o(n^2)$. 
The K\H{o}v\'ari-S\'os-Tur\'an theorem \cite{KST54} gives a more precise upper bound on $\ex(n, H)$ for bipartite $H$, namely $\ex(n, K_{s, s}) \leq c_{s}n^{2-1/s}$.

It is very natural to ask what happens in the forbidden subgraph problem if we instead forbid $H$ as an \emph{induced} subgraph. Unfortunately, in this case, the problem becomes trivial as soon as $H$ is not a complete graph: the maximum number of edges in an $n$-vertex graph without $H$ as an induced subgraph is simply $\binom{n}{2}$. 
 The problem however becomes interesting again when we forbid simultaneously some graph $F$ as a subgraph and another graph $H$ as an induced subgraph. 
This problem was introduced by Loh, Tait, Timmons and Zhou \cite{LTTZ18}, who defined $\ex(n, F, H\text{-ind})$ as the maximum number of edges an $n$-vertex graph can have without containing $F$ as a subgraph or $H$ as an induced subgraph. The main idea behind this question is that forbidding a graph $H$ as an induced subgraph is (in many cases) a much weaker restriction than forbidding it as a subgraph and yet we can recover bounds similar to the K\H{o}v\'ari-S\'os-Tur\'an bounds on the extremal number, so long as we forbid an additional graph $F$ as a subgraph. Note that this is mainly of interest when $\ex(n,H) \ll \ex(n,F)$.
Loh, Tait, Timmons and Zhou \cite{LTTZ18} proved that if we forbid \emph{any} graph $F$ as a subgraph and $K_{r, t}$ as an \emph{induced} subgraph, then we can recover asymptotically the same upper bound as in the K\H{o}v\'ari-S\'os-Tur\'an theorem, namely they proved that $\ex(n, F, K_{r, t}\text{-ind}) = O(n^{2-1/r})$ where the constant depends only on $F$ and $s$.

We prove the following variant where $K_{s,s}$ is forbidden as a subgraph, and an \emph{arbitrary} bipartite graph $H$ is forbidden as an induced subgraph.

\begin{theorem}\label{thm:induced-turan-type}
    Let $H$ be a bipartite graph. 
    There exist constants $0 < \varepsilon_H \leq \frac{1}{2}$ and $c_H > 1,$ depending only on $H$, such that for every $s \geq 1$, $\ex(n, K_{s, s}, H\text{-ind}) \leq c_Hs^{4}n^{2-\varepsilon_H}$.
\end{theorem}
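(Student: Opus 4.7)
The approach is a dependent-random-choice (DRC) argument, suitably refined to handle arbitrary bipartite $H$ rather than just the complete bipartite $K_{r,t}$ handled in~\cite{LTTZ18}. Let $H$ be bipartite with bipartition $(X, Y)$, where $|X|=a \leq |Y|=b$, and let $G$ be a $K_{s,s}$-subgraph-free graph on $n$ vertices with $m$ edges. I would assume for contradiction that $G$ contains no induced copy of $H$; the goal is then to conclude $m \leq c_H s^4 n^{2-\varepsilon_H}$ for suitable constants.

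First, I would apply a standard DRC: sample a multiset $U \subseteq V(G)$ of size $u = \Theta(a)$ uniformly with replacement, and set $W = \bigcap_{v \in U} N(v)$. A Jensen-plus-union-bound calculation, with $\varepsilon_H$ in the range $(0, 1/(2a))$ and $u$ of order $2(a-1)$, yields (after deleting a small number of ``bad'' $a$-subsets) a set $W$ of size at least $n^{\Omega(1)}$ with the property that every $a$-subset of $W$ has common neighborhood of size at least $q$, polynomial in $n$ and carrying an $s^4$ factor. Inside $W$, extract an independent set $X_0 = \{x_1, \ldots, x_a\} \subseteq W$ of size $a$: since $G[W]$ is $K_{s,s}$-subgraph-free, the K\H{o}v\'ari--S\'os--Tur\'an bound forces $G[W]$ to have small average degree, and, provided $W$ is sufficiently large (polynomially in $s$), one can greedily extract such an $X_0$. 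This set will serve as the $X$-side of the sought induced copy of $H$.

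Next, one must embed the $Y$-side: for each $y_j \in Y$, find a vertex $v_j \in V(G) \setminus X_0$ with $N(v_j) \cap X_0 = N_H(y_j)$, arranged to be pairwise non-adjacent. The main obstacle is that standard DRC controls only the ``full'' common neighborhood pattern $P = X_0$, whereas we need every required pattern $P \subseteq X_0$ arising from $\{N_H(y_j) : y_j \in Y\}$ to have many representatives. I would overcome this with a refined iterative DRC: build $X_0$ one vertex at a time, partitioning the candidate set according to adjacency to each newly added $x_i$, and choosing $x_i$ both so that every pattern class remains polynomially large and so that $x_i$ is non-adjacent to the previously chosen $x_j$. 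After $a$ such steps, every required pattern class is polynomially populated, and one further application of KST within each such class (using $K_{s,s}$-freeness) produces the desired pairwise non-adjacent $v_j$'s. The technical heart of the argument is exactly this iterative step: controlling all $2^a$ pattern classes simultaneously, carrying out the independent-set extraction within the refinement, and keeping the $s$-dependence of the final bound polynomial of degree $4$.
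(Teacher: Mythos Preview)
Your approach is genuinely different from the paper's, and while a refined dependent-random-choice argument along these lines can indeed be made to work (variants appear in subsequent work such as~\cite{benny-paper}), the proposal has a real gap at precisely the step you flag as ``the technical heart.'' The iterative DRC you sketch requires, at each step $i$, the existence of a vertex $x_i$ that is simultaneously (a) non-adjacent to all of $x_1,\ldots,x_{i-1}$, and (b) such that for every current pattern class $C_P$ (indexed by $P \subseteq \{x_1,\ldots,x_{i-1}\}$), both $C_P \cap N(x_i)$ and $C_P \setminus N(x_i)$ remain polynomially large. Standard DRC handles the ``adjacent'' constraints by sampling, but says nothing about the ``non-adjacent'' constraints, and combining all of these with (a) for a \emph{single} vertex $x_i$ is not automatic. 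The $K_{s,s}$-freeness does help with the non-adjacent side (if too many candidate $x_i$ were nearly complete to some large $C_P$ one would find a $K_{s,s}$), but intertwining this with the DRC sampling across up to $2^a$ classes at once, while maintaining the $s^4$ dependence, is a genuine piece of work that your outline does not supply. A smaller issue: extracting an independent $a$-set from $W$ via ``KST forces small average degree'' does not go through as stated, since KST on a $K_{s,s}$-free $N$-vertex graph only gives average degree $O(N^{1-1/s})$ and hence independence number $\Omega(N^{1/s})$, which requires $N$ exponential in $s$ to reach $a$; you want instead the off-diagonal Ramsey bound $R(2s,a)=O_a(s^{a-1})$ (noting a $2s$-clique contains $K_{s,s}$), which is polynomial in $s$ with exponent depending on $H$, to be absorbed into the choice of $\varepsilon_H$.

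For comparison, the paper avoids all of this by a short edge-removal induction. The base case is the Loh--Tait--Timmons--Zhou result for $K_{r,t}$ (\cref{lem:induced-turan}). For the inductive step (\cref{lem:edge-removal}), assuming the theorem for $H$, one proves it for $H-e$ via supersaturation: if $G$ is $K_{s,s}$-free with no induced $H-e$ but too many edges, one samples a random $m$-vertex subset and uses McDiarmid's inequality to show it typically inherits enough density to force an induced $H$ (by the inductive hypothesis), thereby counting at least $2sn^{|H|-1}$ induced copies of $H$ in $G$. Pigeonhole then fixes an embedding of $H-\{u,v\}$ (where $e=uv$) with at least $2sn$ extensions to $H$, yielding candidate sets $C_u,C_v$ each of size at least $2s$; any non-edge between them gives an induced $H-e$, while full adjacency between size-$s$ subsets gives $K_{s,s}$. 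No pattern-tracking is needed, and the $s^4$ factor is preserved automatically at each step; the price is that $\varepsilon_H$ shrinks by a factor of $2|H|$ per edge removed.
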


We remark that since we require $H$ to be an \emph{induced} subgraph instead of just a subgraph, this does not follow immediately from the result for $H=K_{r,t}$. In addition, in the case where $H = K_{r, t}$, a key property is that in a bipartite graph forbidding a $K_{r,t}$ as an induced subgraph is equivalent to simply forbidding it as a subgraph, so one may apply the classical Tur\'an results. One does not in general have access to this powerful observation when $H$ is not complete bipartite and we employ significantly different ideas to prove the above result. In particular, we use a density increment approach recently introduced by the second author, Nguyen, Scott and Seymour \cite{loglogEH} in proving the best-known general bound on the infamous Erd\H{o}s-Hajnal conjecture. 

We also note that if $H$ is not bipartite one may simply take the extremal construction for $\ex(n,K_{s,s})$ and pass to a bipartite subgraph with the most edges, losing at most half of the edges while ensuring there is no copy of $H$ (induced or otherwise), so we have no gain in the asymptotic behaviour from this assumption. Furthermore, if we forbid a non-bipartite graph as a (not necessarily induced) subgraph, by considering a balanced complete bipartite graph it is easy to see that the extremal number is still quadratic unless $H$ is a complete bipartite graph.

\textbf{Remark.} We note that since this paper first appeared on arXiv \Cref{thm:induced-turan-type} found several interesting applications. For example, given a fixed $p \in (0,1)$, the binomial random graph $G=\mathcal{G}(n,p)$ and a hereditary property $\mathcal{P}$ such that there is a bipartite graph $H \notin \mathcal{P}$, it holds with high probability that any subgraph of $G$ that belongs to $\mathcal{P}$ has at most $n^{2-\varepsilon_{\mathcal{P}}}$ edges. This follows essentially immediately from \Cref{thm:induced-turan-type} combined with the observation that with high probability $\mathcal{G}(n,p)$ does not contain a $K_{s,s}$ as a (not necessarily induced) subgraph with $s=2\log n$. This answers a question raised by Alon, Krivelevich, and Samotij in \cite{random-turan}. We note that this has also been proved by Fox, Nenadov, and Pham \cite{jacob-paper} who prove much stronger and more general results in this direction, and independently by Clifton, Liu, Mattos, and Zheng \cite{hong-paper} who use the ideas behind our proof of \Cref{thm:induced-turan-type}. In addition, a much more precise version (for specific forbidden bipartite $H$) has been used by Milojevi\'c, Sudakov, and Tomon to prove a number of very interesting results about point-hyperplane incidences \cite{istvan-paper}. We note that this paper is using ideas from extremal graph theory similar to ours to improve upon more geometric ideas used by Fox, Pach, Sheffer, Suk, and Zahl in \cite{semi-algebraic} to improve the classical bounds in the Zarankiewicz problem for semi-algebraic ground graphs. We point an interested reader to \cite{istvan-paper} for more details on this topic. 
For several other applications, strengthenings and extensions see \cite{benny-paper}.
See also \cite{du2024survey} for a new recent survey on degree-boundedness by Du and McCarty.

\textbf{Remark.} Results similar to ours were obtained independently and around the same time by Gir\~{a}o and Hunter \cite{girao2023induced}.
While the general approach seems similar between the two papers, there are also significant differences, for example, in how we prove \Cref{thm:induced-turan-type} compared to their proof of an analogous result. 

\textbf{Organisation.}
In \cref{sec:turan}, we prove our induced variant of the K\H{o}v\'ari-S\'os-Tur\'an~theorem (\cref{thm:induced-turan-type}), our main tool.
Then in \cref{sec:main}, we use \cref{thm:induced-turan-type} to prove a technical strengthening of \cref{thm:McCarty} (see \cref{thm:bipartite-degree-boundedness}).
Lastly, in \cref{sec:consequence}, we use \cref{thm:bipartite-degree-boundedness} to prove \cref{thm:poly-kuhn-osthus}, which in turn is used to quickly prove \cref{thm:chi-bound}.

\textbf{Notation}.
Given a graph $G$, we denote its number of edges by $e(G)$ and its number of vertices by $|G|$.
For every set $X \subseteq V(G)$ we denote by $G[X]$ the graph induced by $X$.
For every disjoint $A, B \subseteq V(G)$ we define $G[A, B]$ to be the bipartite graph obtained from $G[A \cup B]$ by deleting all edges with both ends in $A$ or with both ends in $B$.
We denote by $e(A, B)$ the number of edges with one endpoint in $A$ and the other endpoint in $B$. Hence, $e(A, B) = e(G[A, B])$.
The degree of a vertex $v \in V(G)$ is denoted by $d(v)$ and for any subset $X \subseteq V(G)$, we define $d_X(v) \coloneqq |N_G(v) \cap X|$.
A \emph{regular} graph is a graph in which every vertex has the same degree.
We denote the average degree of a graph $G$ by $d(G)$.
The \emph{maximum average degree} of a graph $G$ is the maximum of $d(H)$ taken over all (induced) subgraphs $H$ of $G$.
A \emph{proper subdivision} of $G$ is a subdivision of $G$ in which every edge is replaced with a path on at least three vertices.
A \emph{$1$-subdivision} of $G$ is a proper subdivision of $G$ in which every edge is replaced by a path on exactly three vertices.
All logarithms will be taken in base 2.
{For a bipartite graph $H$ we let $\varepsilon_H, c_H$ denote the constants $0 < \varepsilon_H \leq \frac{1}{2}$  and $c_H>1$ from \cref{thm:induced-turan-type}.}

\section{Induced extremal numbers}\label{sec:turan}

This section is dedicated to proving \cref{thm:induced-turan-type}.
The following is an immediate consequence of Theorem 1.1 from \cite{LTTZ18}.

\begin{lemma} \label{lem:induced-turan}
    Let $r, t \geq 2$. 
    There exists a constant $0< \varepsilon \leq \frac{1}{2}$ and a constant $c > 1$ such that for every $s \geq 1$, we have $\ex(n, K_{s, s}, K_{r, t}\text{-ind}) \leq cs^{4}n^{2-\varepsilon}$.
\end{lemma}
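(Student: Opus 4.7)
The plan is to extract this directly from Theorem~1.1 of Loh, Tait, Timmons and Zhou \cite{LTTZ18}, which asserts that for any graph $F$ and any integers $r, t \geq 2$,
\[
\ex(n, F, K_{r,t}\text{-ind}) \leq C(F, r, t)\, n^{2-1/r}
\]
for some constant $C(F,r,t)$ depending only on $F$, $r$, and $t$. Taking $F = K_{s,s}$ immediately yields the desired exponent $2-\varepsilon$ with $\varepsilon = 1/r \leq 1/2$ (using $r \geq 2$). The only remaining work is to verify that $C(K_{s,s}, r, t) \leq c(r,t) \cdot s^4$ for some $c(r,t) > 1$.

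To do this I would re-run the proof of Theorem~1.1 of \cite{LTTZ18} with $F = K_{s,s}$, carrying $s$ as an explicit parameter through every step rather than absorbing it into the implicit constant. The dependence on $F$ enters only through $|V(F)| = 2s$ and through the K\H{o}v\'ari--S\'os--Tur\'an bound $\ex(n, K_{s,s}) = O(s^{1/s} n^{2-1/s})$ used in their argument to bound the contribution of vertices of large codegree; both of these are polynomial in $s$. Combining the contributions as they propagate through the dependent random choice / double-counting structure of the proof gives an overall factor of at most $s^4$ in the leading constant, with $r$ and $t$ treated as fixed.

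The only real obstacle is bookkeeping. There is no new conceptual ingredient beyond what already appears in \cite{LTTZ18}, and in particular the exponent $\varepsilon$ is unchanged from their statement; one simply has to check that after substituting $F = K_{s,s}$ and expanding, the exponent of $s$ in the final constant never exceeds four. Once this is confirmed, the statement of \cref{lem:induced-turan} follows by setting $c \coloneqq \max\{c(r,t),\,2\}$ so that the conclusion $c > 1$ is also satisfied.
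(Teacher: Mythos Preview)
Your approach is essentially identical to the paper's: the paper simply states that this lemma ``is an immediate consequence of Theorem~1.1 from \cite{LTTZ18}'' and gives no further argument. Your proposal spells out in slightly more detail why the constant coming from \cite{LTTZ18} is polynomial in $s$, which is exactly what needs to be checked, so there is nothing to add.
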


For our purposes, one should think of $K_{r,t}$ as small and fixed and $K_{s,s}$ as being large. In this regime, the above result is remarkable in the following sense. 
When we forbid $K_{r, t}$ as an induced subgraph, the maximum number of edges in an $n$-vertex $K_{s, s}$-subgraph-free graph is still subquadratic in $n$, but in this case, the dependency on $s$ is polynomial.
In contrast, the K\H{o}v\'ari-S\'os-Tur\'an theorem gives a bound on $\ex(n, K_{s,s})$ with exponential dependency on $s$. The goal of this section is to extend the above result to allow forbidding an arbitrary (small) bipartite graph $H$ as an induced subgraph, as opposed to simply complete bipartite ones. Namely, our goal is to prove \cref{thm:induced-turan-type}. 

This will follow by starting from the above result and repeatedly using the following lemma which shows that if \cref{thm:induced-turan-type} holds for some bipartite graph $H$ then it also holds (with weaker constants) for $H-e$, where $e = uv$ is an arbitrary edge of $H$. 
The basic idea behind the proof is that knowing that \cref{thm:induced-turan-type} holds for $H$ allows us to conclude a supersaturation result, namely, if we have density a bit higher than extremal then we can find many induced copies of $H$ (i.e. many induced subgraphs isomorphic to $H$) in any $K_{s,s}$-subgraph-free graph.
So, if we try to prove the result for $H-e$ by contradiction, we may assume our ground graph has many edges and thus we can conclude there are many induced copies of $H$.
To show this supersaturation result, we sample a uniformly random subset of vertices and then use martingale concentration inequalities to show that the subsampled graph still has high enough density to guarantee us an induced copy of $H$. Using this, we can guarantee at least $2sn^{|H|-1}$ induced copies of $H$ in our $n$ vertex graph. Thus, there is an (induced) embedding of $H - \{u, v\}$ which can be extended into an (induced) embedding of $H$ in at least $2sn$ different ways. In particular, we have sets $C_u$ and $C_v$ of at least $2s$ ``candidates'' where we could embed $u$ and $v$, respectively. If there is a non-edge between a vertex of $C_u$ and a vertex of $C_v$, this gives rise to an induced copy of $H-e$, while otherwise, we find a $K_{s,s}$, in either case we arrive at the desired contradiction.

\begin{lemma} \label{lem:edge-removal}
    Let $H$ be a bipartite graph. 
    If \cref{thm:induced-turan-type} is true for $H$, and $e \in E(H)$, then \cref{thm:induced-turan-type} is true for $H-e$. 
\end{lemma}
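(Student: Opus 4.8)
The plan is to argue by contradiction. Fix the edge $e=uv\in E(H)$ and write $h:=|H|$. Suppose that for constants $c_{H-e}>1$ and $\varepsilon_{H-e}\in(0,\tfrac12]$ to be chosen (depending only on $H$), there is an $n$-vertex $K_{s,s}$-subgraph-free graph $G$ with no induced copy of $H-e$ but with $e(G)>c_{H-e}s^{4}n^{2-\varepsilon_{H-e}}$; we may assume $n$ is sufficiently large, since otherwise this bound already exceeds $\binom n2$ and the statement is vacuous. The proof then has two halves: a \emph{supersaturation} step producing at least $2sn^{h-1}$ induced copies of $H$ in $G$ (this is where we invoke \cref{thm:induced-turan-type} for $H$), and a \emph{pigeonhole-and-dichotomy} step that converts this abundance of induced $H$'s into either an induced $H-e$ or a $K_{s,s}$, contradicting one of the two hypotheses on $G$.

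For supersaturation, pick a uniformly random $m$-subset $W\subseteq V(G)$, with $m=m(n,s)$ chosen below. Since $G[W]$ is $K_{s,s}$-subgraph-free, whenever $e(G[W])>\ex(m,K_{s,s},H\text{-ind})=c_Hs^4m^{2-\varepsilon_H}$ the graph $G[W]$ contains an induced copy of $H$. A routine averaging argument—comparing $\mathbb E[e(G[W])]=\tfrac{\binom m2}{\binom n2}e(G)$ with the trivial bound $e(G[W])\le\binom m2$ and with the threshold $\ex(m,K_{s,s},H\text{-ind})$—shows that if $e(G)\ge 2\tfrac{\binom n2}{\binom m2}\ex(m,K_{s,s},H\text{-ind})$ then $\Pr[e(G[W])>\ex(m,K_{s,s},H\text{-ind})]\ge \ex(m,K_{s,s},H\text{-ind})/\binom m2$. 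Taking expectations of the number of induced copies of $H$ inside $G[W]$, which equals $N\cdot\binom{n-h}{m-h}/\binom nm$ where $N$ is the number of induced copies of $H$ in $G$, yields $N\gtrsim_h c_Hs^4\,n^h m^{-h-\varepsilon_H}$. Choosing $m\asymp (c_Hs^3n)^{1/(h+\varepsilon_H)}$ makes this at least $2sn^{h-1}$, and a short computation shows the density hypothesis on $e(G)$ then reads $e(G)\gtrsim_H s^{4}n^{2-\varepsilon_{H-e}}$ with $\varepsilon_{H-e}:=\varepsilon_H/(h+\varepsilon_H)\in(0,\tfrac12]$, which fixes $c_{H-e}$ as a suitable multiple of $c_H$. (Alternatively one may sample each vertex independently and use martingale or polynomial concentration to show $G[W]$ is dense enough with constant probability, yielding the stronger $N\gtrsim n^h m^{-h}$ and a larger $\varepsilon_{H-e}$; the only subtlety there is that the vertex-exposure Lipschitz constant is a degree, which can be as large as $n$, so one needs a concentration bound finer than plain Azuma—or a preliminary regularisation of $G$. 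For proving the lemma the averaging bound suffices.)

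For the dichotomy step, among the at least $2sn^{h-1}$ induced copies of $H$ in $G$ we pigeonhole over the at most $n^{h-2}$ possible images of $V(H)\setminus\{u,v\}$: some induced embedding $\psi$ of $H-\{u,v\}$ extends to an induced copy of $H$ in at least $2sn$ ways. Let $C_u$ (resp.\ $C_v$) be the set of vertices of $G$ outside $\operatorname{im}\psi$ whose adjacency to $\operatorname{im}\psi$ matches that of $u$ (resp.\ $v$) in $H$. Every extension is a pair $(x,y)\in C_u\times C_v$ with $x\ne y$ and $xy\in E(G)$ (here we use $uv=e\in E(H)$), so $|C_u|\,|C_v|\ge 2sn$ and hence $|C_u|,|C_v|\ge 2s$. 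Now if some $x\in C_u$ and $y\in C_v$ with $x\ne y$ had $xy\notin E(G)$, then $\psi$ together with $u\mapsto x$, $v\mapsto y$ would be an induced copy of $H-e$ in $G$—impossible. Hence every pair from $C_u\times C_v$ with distinct entries is an edge of $G$; picking disjoint $A\subseteq C_u$ and $B\subseteq C_v$ with $|A|=|B|=s$ (possible as both sets have size at least $2s$) exhibits a $K_{s,s}$ subgraph of $G$, the desired contradiction.

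The main obstacle is the supersaturation step: one needs a count of induced copies of $H$ that is simultaneously polynomial in $s$ and of order at least $n^{h-1}$, so as to survive the pigeonhole over the $n^{h-2}$ choices for $V(H)\setminus\{u,v\}$. This forces a careful balancing of the sample size $m$ against the density threshold coming from \cref{thm:induced-turan-type} for $H$, together with bookkeeping of the constants $c_H,\varepsilon_H$ through the argument—in particular, checking that the resulting $\varepsilon_{H-e}$ stays positive and at most $\tfrac12$. By contrast, once enough induced copies of $H$ are in hand, the concluding dichotomy between an induced $H-e$ and a $K_{s,s}$ is essentially immediate.
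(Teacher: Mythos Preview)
Your proposal is correct and follows the same two-step architecture as the paper's proof: a supersaturation argument producing at least $2sn^{h-1}$ induced copies of $H$, followed by a pigeonhole on the image of $H-\{u,v\}$ and the edge/non-edge dichotomy on the candidate sets $C_u,C_v$. The second step is essentially identical in both.

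The only real difference is in how supersaturation is obtained. The paper samples $X_1,\dots,X_m$ independently and uniformly from $V(G)$ and applies McDiarmid's inequality to $f=e(G[\{X_1,\dots,X_m\}])$, using that changing a single $X_i$ alters $f$ by at most $m-1$; this gives $\Pr\bigl[e(G[W])>c_Hs^4m^{2-\varepsilon_H}\bigr]\ge 3/4$ directly, whence $N\ge\tfrac34(n/m)^h$, and with $m\approx(n/s)^{1/h}$ one reads off $\varepsilon_{H-e}=\varepsilon_H/(2h)$. Your first-moment averaging argument is more elementary---no concentration inequality is needed---and in fact delivers the slightly larger exponent $\varepsilon_{H-e}=\varepsilon_H/(h+\varepsilon_H)$. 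One correction, though: your parenthetical remark that the martingale route is obstructed because ``the vertex-exposure Lipschitz constant is a degree, which can be as large as $n$'' is mistaken. The relevant Lipschitz constant is the degree of the resampled vertex \emph{inside the $m$-vertex sample}, which is at most $m-1$; plain McDiarmid applies with no prior regularisation of $G$, and this is precisely what the paper does.
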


\begin{proof}
    Since $e \in E(H)$ we have $|H| \ge 2$.
    Let $0< \varepsilon \leq \frac{1}{2}$ and $c > 1$ be the constants such that for every $s \geq 1$, $\ex(n, K_{s, s}, H\text{-ind})$ $\leq cs^{4}n^{2-\varepsilon}$, as guaranteed by \cref{thm:induced-turan-type} applied to $H$.
    
    Set $\varepsilon' \coloneqq \frac{\varepsilon}{2|H|}$ 
    and 
    $c' \coloneqq \max\left\{|H|, c\right\}$.
    Let $s\geq 1$ be an integer, 
    $G$ be a $K_{s, s}$-subgraph-free graph and $n~=~|G|$.
    Suppose for a contradiction, that $G$ does not contain $H - e$ as an induced subgraph and that {$e(G)~>~c's^{4}n^{2-\varepsilon'}$}.
    Then, $n^2 \geq e(G) > c's^{4}n^{2-\varepsilon'},$ and hence 
    \begin{equation}\label{eq:n-lwr-bnd}
        n \geq (c's^{4})^{1/\varepsilon'} \geq \max\left\{ 4^{1/\varepsilon'} \cdot 16s^2, (4|H|)^{|H|} \cdot 4s\right\},
    \end{equation}
    where we used $\varepsilon' \leq 1/|H|$ and $s \ge 2$, which must hold since $G$ is $K_{s,s}$-subgraph-free and $e(G)>0$.

    \begin{claim} \label{cl:density-subsets}
    Let $m$ be an integer
    such that $m \geq \left(\frac{n}{4s}\right)^{1/|H|}$ and $m \leq \sqrt{n}$. 
    If $U$ is a subset of $m$ vertices of $G$, chosen uniformly at random, then $$\mathbb{P}\left[ e(G[U]) > cs^{4}m^{2-\varepsilon} \right] \geq 3/4.$$
    \end{claim}

    \begin{subproof}
        Let $X_1, \ldots, X_m$ be independent random variables, each with uniform distribution on $V(G)$. 
        Let $f(X_1, \ldots, X_m)$ be the number of edges of $G[\{X_1, \ldots, X_m\}]$. Note that it is possible that $X_i = X_j$ when $i \neq j$, hence $\{X_1, \ldots, X_m\}$ is a set of \emph{at most} $m$ vertices. 
        Changing the outcome of a single $X_i$ changes the value of $f$ by at most $m-1$. In particular, $f$ satisfies the ``bounded differences property'' with bound $m-1$. 
        We have 
        \begin{align*}
            \mathbb{E}[f(X_1, \ldots, X_m)] &= \mathbb{E}\left[\sum_{\{i, j\} \in \binom{[m]}{2}} \mathbbm{1}_{X_iX_j \in E(G)}\right]
                                            = \sum_{\{i, j\} \in \binom{[m]}{2}} \mathbb{P}\left[X_iX_j \in E(G)\right] 
                                            = \binom{m}{2}\frac{2e(G)}{n^2}
                                            \geq 2c's^{4}\binom{m}{2}n^{-\varepsilon'}.
        \end{align*} 
        We have $m^{\varepsilon} \ge \left(\frac{n}{4s}\right)^{{\varepsilon}/{|H|}}=\left(\frac{n}{4s}\right)^{2\varepsilon'}\ge 4n^{\varepsilon'},$ where the last inequality follows from \eqref{eq:n-lwr-bnd}.
         Hence, $cs^{4}m^{2-\varepsilon} \leq c's^{4}\binom{m}{2}n^{-\varepsilon'} \le \frac12 \mathbb{E}[f(X_1, \ldots, X_m)]$. Thus, by McDiarmid's inequality \cite[Theorem 2.7]{McDiarmid1998}:
        \begin{align*}
            \mathbb{P}\left[f(X_1, \ldots, X_m) \leq cs^{4}m^{2-\varepsilon}\right] 
                &\leq \mathbb{P}\left[f(X_1, \ldots, X_m) \leq \frac{1}{2}\mathbb{E}[f(X_1, \ldots, X_m)]\right] \\
                &= \mathbb{P}\left[f(X_1, \ldots, X_m) - \mathbb{E}[f(X_1, \ldots, X_m)] \leq -\frac{1}{2}\mathbb{E}[f(X_1, \ldots, X_m)]\right] \\
                &\leq \exp\left[\frac{-\mathbb{E}[f(X_1, \ldots, X_m)]^2}{2m(m-1)^2}\right] \leq \exp\left[\frac{-\left(2cs^{4}m^{2-\varepsilon}\right)^2}{2m^3}\right] \le e^{-4m^{1-2\varepsilon}}
                \leq \frac{1}{8},
        \end{align*}
        where the last two inequalities use $c> 1, s \ge 2$ and $\varepsilon \le \frac 12$.
        
        Observe that conditioning on the event that all $X_i$ are pairwise distinct, we obtain that $\{X_1, \ldots, X_m\}$ is a uniformly random subset of $m$ vertices of $G$.
        Note also that $\mathbb{P}[\exists i \neq j, X_i = X_j] \leq \sum_{\{i, j\} \in \binom{[m]}{2}} \mathbb{P}[X_i = X_j] = \binom{m}{2} \frac{1}{n} \leq \frac{m^2}{2n} \leq \frac{1}{2}$.
    
        Thus, $\mathbb{P}[\forall i \neq j, X_i \neq X_j] \geq 1/2$. Hence, 
        \begin{align*}
            \mathbb{P}\left[e(G[U]) \leq cs^{4}m^{2-\varepsilon}\right] &= \mathbb{P}\left[f(X_1, \ldots, X_m) \leq cs^{4}m^{2-\varepsilon} \mid \forall i \neq j, X_i \neq X_j\right] \\
                &= \frac{\mathbb{P}\left[f(X_1, \ldots, X_m) \leq cs^{4}m^{2-\varepsilon} \cap \forall i \neq j, X_i \neq X_j\right]}{\mathbb{P}\left[\forall i \neq j, X_i \neq X_j\right]} \\
                &\leq 2\mathbb{P}\left[f(X_1, \ldots, X_m) \leq cs^{4}m^{2-\varepsilon}\right] \leq \frac{1}{4}.
        \end{align*}
\vspace{-1.4cm}
        
    \end{subproof}

    Let $m = \lfloor\left(\frac{3n}{8s}\right)^{1/|H|}\rfloor$.
    
    Note that $\left(\frac{3n}{8s}\right)^{1/|H|} =  \left (\frac32\right)^{1/|H|}\cdot \left(\frac{n}{4s}\right)^{1/|H|} \ge \left(1+\frac1{4|H|}\right)\cdot \left(\frac{n}{4s}\right)^{1/|H|}\ge \left(\frac{n}{4s}\right)^{1/|H|} + 1$, where we used the inequality $(3/2)^x \ge 1+x/4$ which holds for any $x\ge 0$ and \eqref{eq:n-lwr-bnd}. 
    So $m \geq \left(\frac{n}{4s}\right)^{1/|H|}$. 
    Moreover, since $|H| \ge 2$ we have ${m \leq \left(\frac{3n}{8s}\right)^{1/|H|} \leq \sqrt{n}}$.
    By \cref{cl:density-subsets}, at least $\frac{3}{4}\binom{n}{m}$ of the $m$-vertex subsets of $G$ induce more than $cs^{4}m^{2-\varepsilon}$ edges. Let $X$ be such a subset. Then, since $G[X]$ has no $K_{s, s}$ subgraph and by assumption \cref{thm:induced-turan-type} holds for $H$, we obtain that $G[X]$ contains an induced copy of $H$. 
    
    Hence, there are at least $\frac{3}{4}\binom{n}{m}$ subsets $X$ of size $m$ such that $G[X]$ contains an induced $H$. Conversely, every induced copy of $H$ is contained in exactly $\binom{n-|H|}{m-|H|}$ $m$-vertex subsets of $G$.
    By double counting the pairs $(Y, X)$ such that $|X| = m, Y \subseteq X$ and $G[Y] \cong H$, using that $m \leq \left(\frac{3n}{8s}\right)^{1/|H|}$ for the last inequality, we get that the number $\eta_H$ of distinct induced copies of $H$ satisfies: \begin{align*}
        \eta_H &\geq \frac{3}{4} \frac{\binom{n}{m}}{\binom{n-|H|}{m-|H|}} = \frac{3}{4} \frac{n(n-1)\ldots(n-|H|+1)}{m(m-1)\ldots(m-|H|+1)} \geq \frac{3}{4} \left(\frac{n}{m}\right)^{|H|}
             \geq 2sn^{|H|-1}. 
    \end{align*}
    Recall $e \in E(H)$ and let $u, v$ denote the endpoints of $e$. Since there are at least $2sn^{|H|-1}$ pairwise distinct induced copies of $H$ in $G$, there is a set $X \subseteq V(G)$ of size $|H|-2$ such that $G[X] \cong H - \{u, v\}$ and such that there are at least $2sn$ different ways to extend $X$ to a set $Y$ of size $|H|$ such that $G[Y] \cong H$. 
    Let $C_u \subseteq V(G) \setminus X$ be the set of vertices that can play the role of $u$ in one of these extensions of $X$, and define $C_v$ similarly. The number of edges between $C_u$ and $C_v$ is equal to the number of extensions of $X$ into a set $Y$ such that $G[Y] \cong H$. Thus, there are at least $2sn$ edges between $C_u$ and $C_v$. In particular, since $|C_u|, |C_v| \leq n$, we have $|C_u|, |C_v| \geq 2s$. Take an arbitrary subset $U$ of $C_u$ of size $s$, and then an arbitrary subset $V$ of $C_v \setminus U$ of size $s$. If some vertex $x_u \in U$ is not adjacent to some vertex $x_v \in V$, then $G[X \cup \{x_u, x_v\}] \cong H-e$, which is impossible by assumption. Thus, $U$ is complete to $V$ and therefore $G$ has a $K_{s, s}$ subgraph, a contradiction.
\end{proof}

\textbf{Remark.} 
For our applications in this paper, it was enough to show that \Cref{thm:induced-turan-type} holds with some $\epsilon_H >0$, and we made no attempts to optimize the value of $\epsilon_H$. Since this paper appeared on arXiv, improved bounds on $\epsilon_H$ have been established, including \cite{benny-paper}, which gives essentially optimal bounds on $\epsilon_H$. 

\section{Bipartite reduction}\label{sec:main}

In this section, we shall prove a technical strengthening of \cref{thm:McCarty} (see \cref{thm:bipartite-degree-boundedness}), which shall then also be used in the final section to prove \cref{thm:poly-kuhn-osthus} and \cref{thm:chi-bound}, the main results of this paper.

The following ``regularisation'' lemma will allow us to pass to either an almost regular subgraph with similar average degree or to a very unbalanced bipartite subgraph accounting for a constant fraction of the edges. The following is a slight weakening of \cite[Lemma 3.4]{du2023induced}, which in turn builds on \cite{erdos-sauer} and a long history of similar regularisation lemmas. Let us highlight here a remarkable recent breakthrough of Janzer and Sudakov \cite{erdos-sauer}, who resolved the Erd\H{o}s-Sauer problem on minimum average degree requirement to guarantee existence of a regular subgraph.
\begin{lemma}
\label{lem:regular-or-bipartite}
There exists a function $f_1$ such that the following holds. 
Let $\gamma \in (0, 1/5)$ and $d \geq f_1(\gamma)$.
Let $G_0$ be an $n$-vertex graph which is $d$-degenerate and has average degree $d$. 
Then, either
\begin{enumerate}[(A)]
    \item $G_0$ contains an induced subgraph $G^*$ of average degree at least $6d^{1-5\gamma}$ and maximum degree $\Delta(G^*) \leq 6d^{1+3\gamma}$, or
    \label{regular}
    \item there is a partition $V(G_0) = A \sqcup B$ such that $e(A, B) \geq nd/4$ and $|A| \geq 2^{d^{\gamma}-2}|B|$.
    \label{bipartite}
\end{enumerate}
\end{lemma}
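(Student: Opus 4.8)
The plan is to start from a standard fact in the degeneracy/average-degree literature: since $G_0$ is $d$-degenerate with average degree $d$, there is an acyclic orientation of $G_0$ in which every vertex has out-degree at most $d$ (the degeneracy ordering). Write $d^+(v)$ for the out-degree and $d^-(v)$ for the in-degree; then $\sum_v d^+(v) = \sum_v d^-(v) = e(G_0) \ge nd/2$. The idea is to classify vertices according to the rough size of their in-degree and to dyadically bucket: for $i \ge 0$ let $V_i = \{v : 2^i \le d^-(v) < 2^{i+1}\}$, plus a bucket $V_{-1}$ for $d^-(v) < 1$. First I would argue that the buckets with very large index $i$ (say $2^i \gtrsim d^{1+3\gamma}$) carry few enough in-edges that we can discard them: there are at most $e(G_0)/2^i \le nd/2^i$ vertices in $V_i$, so all buckets with $2^i \ge d^{1+3\gamma}$ together contain at most $O(nd \cdot d^{-1-3\gamma}) = O(n d^{-3\gamma})$ vertices and hence (using $d$-degeneracy, so each such vertex has total degree at most $O(d \cdot d^{\gamma})$ — wait, that is not automatic) at most $O(n d^{1-2\gamma})$ incident edges; for $d \ge f_1(\gamma)$ large this is a tiny fraction of $e(G_0) \ge nd/2$. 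Actually the cleaner bookkeeping is to bound edges directly: an edge oriented into $v \in V_i$ is counted, and $\sum_{i : 2^i \ge d^{1+3\gamma}} \sum_{v \in V_i} d^-(v) \le \sum_{i : 2^i \ge d^{1+3\gamma}} |V_i| 2^{i+1} \le \sum_{i} 2 \cdot nd$, which does not obviously telescope, so instead use $|V_i| \le nd/2^i$ to get $\sum_{v\in V_i} d^-(v) \le 2^{i+1}|V_i| \le 2nd$ per bucket but bound the number of relevant buckets and use that their union has few vertices, then apply $d$-degeneracy to the induced subgraph on that union. I will need to be a little careful here; this is the first place routine estimates must be done honestly.

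After discarding the high-in-degree buckets, the remaining vertices $W$ still span a constant fraction of the edges, and every $v \in W$ has $d^-(v) < d^{1+3\gamma}$. Now run the usual averaging/pigeonhole over the $O(\log d)$ surviving buckets to find a single bucket $V_i$ — more precisely a "level set" — on which the edges going from $W$ into $V_i$ number at least $e(G_0[W]) / O(\log d) \ge \Omega(nd/\log d)$. Let $B = V_i$ and $A = W \setminus V_i$ (or rather the set of tails of those in-edges). Every vertex of $B$ receives at most $2^{i+1} \le 2d^{1+3\gamma}$ edges from $A$, while the number of $A$–$B$ edges is $\ge \Omega(nd/\log d)$, so $|B| \ge \Omega(nd/(\log d \cdot d^{1+3\gamma})) = \Omega(n/(d^{3\gamma}\log d))$ — hmm, that controls $|B|$ from below, but alternative (B) wants $|B|$ \emph{small} relative to $|A|$. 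So the dichotomy should instead be driven by comparing $|B|$ to $|A|$: if it happens that $|A| \ge 2^{d^\gamma - 2}|B|$ for the chosen bucket, output alternative (B) (after checking $e(A,B) \ge nd/4$, which forces taking a cleaner bucket with the $\log d$ loss absorbed — this is why we need $d \ge f_1(\gamma)$, to kill the $\log d$ and constant factors). Otherwise $|A| < 2^{d^\gamma - 2}|B|$, meaning $B$ is not too small; then the bipartite graph $G_0[A,B]$ has average degree (on its $\le |A|+|B| \le (1 + 2^{d^\gamma-2})|B|$ vertices) at least $\Omega(nd / (|A|+|B|)\log d)$, and combined with the lower bound on $|B|$ and maximum in-degree bound, a short computation gives a subgraph of average degree $\ge 6 d^{1-5\gamma}$. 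To also get the maximum degree bound $\Delta(G^*) \le 6d^{1+3\gamma}$ in case (A), pass within $G_0[A,B]$ (or a further bucketed piece) to the set of vertices of degree at most $6d^{1+3\gamma}$; deleting the rest removes at most half the edges by a Markov/degeneracy argument, since the high-degree vertices are few.

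The main obstacle I anticipate is the bookkeeping that makes alternatives (A) and (B) genuinely exhaustive with the \emph{specific} exponents $1-5\gamma$, $1+3\gamma$ and the constant $6$: one has to track a $\Theta(\log d)$ loss from the bucket pigeonhole, a $\Theta(1)$ loss from discarding high-degree vertices twice (once for in-degree $\ge d^{1+3\gamma}$, once for total degree $\ge 6d^{1+3\gamma}$), and the trade-off between "$B$ small $\Rightarrow$ (B)" and "$B$ not small $\Rightarrow$ dense balanced-ish bipartite subgraph $\Rightarrow$ (A)". Choosing $f_1(\gamma)$ large enough that $d^\gamma$ dominates $\log d$ and all constants is what lets the exponents $3\gamma$ and $5\gamma$ (rather than $\gamma$) absorb every polylogarithmic and constant factor. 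Since the excerpt explicitly says this lemma is "a slight weakening of \cite[Lemma 3.4]{du2023induced}", I would in practice cite that argument and only re-derive the weakening; the cleanest writeup invokes the Erd\H{o}s--Sauer-type orientation lemma as a black box and then does the two-page dyadic pigeonhole above.
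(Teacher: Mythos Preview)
The paper does not prove this lemma: it is stated without proof as ``a slight weakening of \cite[Lemma 3.4]{du2023induced}'' (Du, Gir\~{a}o, Hunter, McCarty, Scott), with a pointer to the Janzer--Sudakov regularisation machinery \cite{erdos-sauer} for context. You correctly anticipated this in your final paragraph, so in that sense your proposal matches the paper exactly --- the intended ``proof'' is a citation.

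As for the sketch you offer in lieu of a citation: the overall shape (degeneracy orientation, dyadic bucketing by in-degree, a dichotomy on whether a heavy bucket is tiny or not) is the right genre of argument, but the version you wrote down has a real gap at the first step. You want to discard vertices of in-degree $\ge d^{1+3\gamma}$, but those vertices are precisely the ones absorbing a potentially large share of the edges as \emph{in}-edges, so you cannot bound the number of edges you lose just from a vertex count and $d$-degeneracy (you noticed this yourself: ``wait, that is not automatic''). The actual proofs of lemmas of this type (as in \cite{du2023induced}) do not discard the high-degree set; they instead take $B$ to be a high-degree set and argue the dichotomy directly on whether $e(A,B)$ is large (giving outcome (B)) or small (so most edges live inside the low-degree part $A$, giving outcome (A)), possibly iterating to tighten the ratio. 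Your later ``$B$ small $\Rightarrow$ (B), $B$ not small $\Rightarrow$ (A)'' paragraph is closer to this, but the computation you sketch for case (A) does not actually produce the specific pair of exponents $(1-5\gamma,\,1+3\gamma)$ --- you would need a more careful iteration or a second round of cleaning, which is exactly what the Du et al.\ lemma supplies. So: cite it, as the paper does.
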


The following lemma will be an initial step in resolving the ``almost regular'' outcome of \Cref{lem:regular-or-bipartite}, namely case \ref{regular}. It is a modified version of Lemma 3.3 from \cite{du2023induced}. 
It says that any almost regular graph $G$ with average degree at least polynomial in $s$, which does not have $K_{s,s}$ as a subgraph or $H$ as an induced subgraph contains an induced subgraph with girth at least five and only polynomially smaller average degree. 
The proof is a basic application of the alteration method, where we subsample every vertex with some probability and then remove any still contained in a $C_3$ or $C_4$.
Recall, $\varepsilon_H$ and $c_H$ are the constants we obtain by applying \cref{thm:induced-turan-type} to $H$.

\begin{lemma} \label{lem:balanced-case}
    Let $H$ be a bipartite graph. There exists a polynomial $P$ satisfying the following. Let $s \geq 2$ and $\Delta \geq P(s)$.
    Let $G$ be a $K_{s, s}$-subgraph-free graph and no $H$ as an induced subgraph, with maximum degree at most $\Delta$ and average degree at least $\Delta^{1-\varepsilon_H/10}$. Then, $G$ has an induced subgraph $G'$ of girth at least five with average degree $d(G') \ge \frac{1}{2}{\Delta^{\varepsilon_H/10}}$.
\end{lemma}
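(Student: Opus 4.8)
The plan is to prove this by the alteration method. I would sample a random induced subgraph of $G$ by keeping each vertex independently with a carefully chosen probability $p$, and then delete every vertex that still lies on a cycle of length at most four; the resulting induced subgraph $G'$ then automatically has girth at least five (a cycle of length at most four in $G'$ would survive inside the sampled graph, so all of its vertices would have been deleted), and the real work is to show that in expectation $G'$ retains enough edges relative to its order.

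The essential preliminary step is to bound the numbers of triangles and $4$-cycles of $G$ using \cref{thm:induced-turan-type}, rather than using the naive bounds $O(n\Delta^2)$ and $O(n\Delta^3)$, which turn out to be far too weak (they would force $p$ so small that the surviving graph has vanishing average degree). Since having no induced $H$ and no $K_{s,s}$ subgraph is inherited by induced subgraphs, for every vertex $v$ the graph $G[N(v)]$ is $K_{s,s}$-subgraph-free with no induced $H$ on at most $\Delta$ vertices, so $e(G[N(v)]) \le c_H s^4 \Delta^{2-\varepsilon_H}$; summing over $v$ bounds the number of triangles of $G$ by $O_H(s^4 n\Delta^{2-\varepsilon_H})$. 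Similarly, every $4$-cycle through an edge $bc$ of $G$ contributes an edge of $G[N(b)\cup N(c)]$, a graph on at most $2\Delta$ vertices, so (using $e(G)\le n\Delta/2$) the number of $4$-cycles of $G$ is $O_H(e(G)\cdot s^4\Delta^{2-\varepsilon_H}) = O_H(s^4 n\Delta^{3-\varepsilon_H})$. It is exactly the factor $\Delta^{-\varepsilon_H}$ gained here, at the affordable cost of $s^4$, that makes the rest of the argument go through.

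Now let $U$ be the random vertex set, let $B\subseteq U$ be the set of vertices lying on a cycle of length at most four in $G[U]$, and put $G' = G[U\setminus B]$. Then $|V(G')|\le |U|$ and $e(G') \ge e(G[U]) - Z$, where $Z$ is the number of edges of $G[U]$ incident to $B$. One computes $\mathbb{E}[|U|] = pn$ and $\mathbb{E}[e(G[U])] = p^2 e(G) \ge \frac12 p^2 n\Delta^{1-\varepsilon_H/10}$. The crucial point — where one must resist the temptation to bound degrees in $G[U]$ trivially by $\Delta$ — is the estimate of $\mathbb{E}[Z]$: each edge of $G[U]$ counted by $Z$ comes with a triangle or $4$-cycle of $G[U]$ through one of its endpoints, so a union bound over configurations (a triangle or $4$-cycle of $G$, together with one further incident vertex, all landing in $U$) and the counts above give $\mathbb{E}[Z] \le O_H\!\big(s^4 n(p^3\Delta^{2-\varepsilon_H} + p^4\Delta^{3-\varepsilon_H} + p^5\Delta^{4-\varepsilon_H})\big)$.

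To finish, take $p \coloneqq 8\Delta^{\varepsilon_H/5-1}$, so that $\mathbb{E}[e(G[U])] \ge 4\Delta^{\varepsilon_H/10}\,\mathbb{E}[|U|]$ and each of the three error terms above equals $\mathbb{E}[e(G[U])]$ times a quantity of the form $\mathrm{const}_H\cdot s^4\Delta^{-\delta}$ for a fixed $\delta = \delta(\varepsilon_H)>0$. Choosing the polynomial $P$ (of degree and coefficients depending only on $H$) large enough that $\Delta\ge P(s)$ makes each such ratio at most $\frac{1}{100}$, we get $\mathbb{E}\big[e(G[U]) - Z - 2\Delta^{\varepsilon_H/10}|U|\big] \ge \big(1 - \frac12 - \frac{3}{100}\big)\mathbb{E}[e(G[U])] > 0$. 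Hence some outcome of the sampling satisfies $e(G') \ge e(G[U]) - Z > 2\Delta^{\varepsilon_H/10}|U| \ge 2\Delta^{\varepsilon_H/10}|V(G')|$, so $d(G') > 4\Delta^{\varepsilon_H/10} \ge \frac12\Delta^{\varepsilon_H/10}$, as required. I expect the main obstacle to be the content of the middle two paragraphs: extracting the $\Delta^{-\varepsilon_H}$ savings in the cycle counts from \cref{thm:induced-turan-type}, and then bounding $\mathbb{E}[Z]$ sharply enough, i.e. without reintroducing a factor $\Delta$; once those are in place the choice of $p$ and the closing arithmetic are routine.
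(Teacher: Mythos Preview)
Your proposal is correct and follows essentially the same route as the paper's proof: bound the numbers of $3$- and $4$-cycles (and of edge--cycle incidences) via \cref{thm:induced-turan-type} applied to neighbourhoods, sample vertices independently with probability $p=\Theta(\Delta^{\varepsilon_H/5-1})$, delete vertices lying on surviving short cycles, and compare $\mathbb{E}[e(G[U])]$ against the expected number of destroyed edges and $\mathbb{E}[|U|]$. The only differences are cosmetic: the paper takes $p=\Delta^{\varepsilon_H/5-1}$ and separates the deleted edges as $6Y+Z$ (edges inside surviving cycles versus edges meeting a cycle in one vertex), while you take $p=8\Delta^{\varepsilon_H/5-1}$ and lump everything into a single $Z$; the resulting three error terms $p^3\Delta^{2-\varepsilon_H},\,p^4\Delta^{3-\varepsilon_H},\,p^5\Delta^{4-\varepsilon_H}$ and the final expectation comparison are the same.
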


\begin{proof}
   Let $\varepsilon=\varepsilon_H$ and $c=c_H$.
    Set $P(x) = x^{20/\varepsilon}+(116c)^{10/\varepsilon}$.
    For $\ell \in \{3,4\}$, let $\mathcal{C}_\ell$ denote the set of $\ell$-cycles of $G$ and let $\mathcal{S_\ell}$ denote the set of pairs $(e, C)$ with $e \in E(G)$, $C \in \C_\ell$ such that $e$ contains exactly one vertex of $C$.
 
    Let $xy$ be an edge of $G$. Then, $|N(x) \cup N(y)| \leq 2\Delta$ so by \cref{thm:induced-turan-type}, $G[N(x) \cup N(y)]$ has at most $cs^4(2\Delta)^{2-\varepsilon}$ edges. 
    There are at most two ways an edge of $G[N(x) \cup N(y)]$ can be extended to a four cycle using the edge $xy$. 
    Therefore, the number of 4-cycles containing the edge $xy$ is at most $8cs^4\Delta^{2-\varepsilon}$.
    Since $G$ has at most $n\Delta$ edges, it follows that $|\mathcal{C}_4| \leq 2cs^4n\Delta^{3-\varepsilon}$.
    For every cycle $C$, there are at most $4\Delta$ edges $e$ such that $(e, C) \in \mathcal{S}_4$ since every vertex of $C$ has degree at most $\Delta$. 
    Therefore, $|\mathcal{S}_4| \leq 8cs^4n\Delta^{4-\varepsilon}$. 
    Similarly, every vertex $x$ is in at most $cs^4\Delta^{2-\varepsilon}$ triangles so $|\mathcal{C}_3| \leq cs^4n\Delta^{2-\varepsilon}$ and $|\mathcal{S}_3| \leq 3cs^4n\Delta^{3-\varepsilon}$.

    Set $p \coloneqq \Delta^{\varepsilon/5-1}$. Let $U$ be a random subset of $V(G)$ containing each vertex independently with probability $p$. Let $\mathcal{C}$ be the set of 3-cycles and 4-cycles of $G[U]$, in particular $\mathcal{C} \subseteq  \mathcal{C}_3 \cup \mathcal{C}_4$. 
    Let $U' \subseteq U$ be the set of vertices that are in a 3-cycle or 4-cycle in $G[U]$.
    Let $G'$ be the graph induced by $U \setminus U'$. Then, $G'$ has girth at least 5.

    Let $X = e(G[U]), Y = |\mathcal{C}|$ and let $Z$ denote the number of pairs $(e, C) \in \mathcal{S}_3 \cup \mathcal{S}_4$ such that both $e \subseteq U$ and $C \in \mathcal{C}.$
    Let us examine the edges we delete from $G[U]$ when we remove $U'$.
    For each $C \in \mathcal{C}$, when we delete $V(C)$ from $G[U]$ we potentially remove six edges (between vertices of $C$) and we remove all edges $e$ which intersect $C$ at exactly one vertex (i.e. we remove all edges $e$ for which $(e, C) \in \mathcal{S}_3 \cup \mathcal{S}_4$).
    Overall, we get $e(G') \geq X - 6Y - Z$. 
    We now have:
    \begin{align*}
        \mathbb{E}[X] &= p^2e(G) = \frac{p^2}{2}nd(G).\\
        \mathbb{E}[Y] &= p^4|\mathcal{C}_4| + p^3|\mathcal{C}_3| \leq 2cp^4s^4n\Delta^{3 - \varepsilon} + cp^3s^4n\Delta^{2-\varepsilon} = cs^4np(2\Delta^{-2\varepsilon/5} + \Delta^{-3\varepsilon/5}) \le 3cs^4np\Delta^{-2\varepsilon/5}.\\
        \mathbb{E}[Z] &= p^5|\mathcal{S}_4| + p^4|\mathcal{S}_3| \leq 8cp^5s^4n\Delta^{4-\varepsilon} + 3cp^4s^4n\Delta^{3 - \varepsilon}= cs^4np(8\Delta^{-\varepsilon/5} + 3\Delta^{-2\varepsilon/5}) \le 11cs^4np\Delta^{-\varepsilon/5}.
    \end{align*} 

    Thus, \begin{align*}
        \mathbb{E}\left[e(G') - |U| \cdot \frac{pd(G)}{4}\right] &\ge \mathbb{E}\left[X - 6Y - Z - |U| \cdot \frac{pd(G)}{4}\right] \\
            &\geq \frac{p^2}{2}nd(G)  - 18cs^4np\Delta^{-2\varepsilon/5} - 11cs^4np\Delta^{-\varepsilon/5} - np \cdot \frac{pd(G)}{4}\\
            &\ge np\left(\frac{pd(G)}{4} - 29cs^4\Delta^{-\varepsilon/5}\right) > 0,
    \end{align*}
where the final inequality holds since $pd(G)\cdot \Delta^{\varepsilon/5}\ge \Delta^{\varepsilon/10}\cdot P(s)^{\varepsilon/5} > 116c\cdot s^4.$ 

Choosing an outcome $U$ such that the above holds, we have $e(G') \geq \frac{pd(G)}{4}|U| > 0$ so $G'$ has average degree at least ${\frac12{pd(G)}\ge \frac{1}{2}\Delta^{\varepsilon/10}}.$
\end{proof}

We can further extend \cref{lem:balanced-case} to obtain a bipartite induced subgraph with large average degree. For this, we require the following nice result of Kwan, Letzter, Sudakov, and Tran \cite{kwan2020}.
\begin{theorem}[\protect{Theorem 1.2 of \cite{kwan2020}}] \label{lem:bipartite-reduction}
    There exists a constant $C>0$ such that every triangle-free graph with average degree at least $d$ contains an induced bipartite subgraph of average degree at least $\frac{C\log d}{\log\log d}$.
\end{theorem}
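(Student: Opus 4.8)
The plan is to pass to a well-behaved subgraph and then build the bipartite subgraph out of a single neighbourhood $N(v)$ together with a carefully chosen independent set inside the second neighbourhood $N_2(v)$; the point is that in a triangle-free graph every neighbourhood is an independent set, so $G[N(v)\cup T]$ is automatically an \emph{induced} bipartite subgraph whenever $T\subseteq N_2(v)$ is independent. Concretely: since $G$ is triangle-free with average degree at least $d$, a standard regularisation lets us assume $G$ is almost regular of degree $\Theta(d)$, in particular of minimum degree $\Omega(d)$ (this costs only constant factors, negligible at the scale $\log d/\log\log d$), and that $d$ exceeds a fixed absolute constant. Set $k\coloneqq\lfloor c\log d/\log\log d\rfloor$ for a small absolute constant $c>0$; the goal is an induced bipartite subgraph of average degree at least $k$. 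Writing $A=N(v)$, the graph $G[A\cup T]$ (for independent $T\subseteq N_2(v)$) has average degree $2e(A,T)/(|A|+|T|)$, so it suffices to find $v$ and an independent $T\subseteq N_2(v)$ with $|T|\gtrsim|A|$ in which every vertex has at least $k$ neighbours in $A$: then $e(A,T)\ge k|T|\ge\tfrac12 k(|A|+|T|)$, as required.

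To find such a $T$, fix $v$, put $A=N(v)$, and let $\mathcal W=\{w\in N_2(v):|N(w)\cap A|\ge k\}$. Triangle-freeness gives $N(a)\setminus\{v\}\subseteq N_2(v)$ for each $a\in A$, so $e(A,N_2(v))=\sum_{a\in A}(d(a)-1)=\Omega(d^2)$; splitting this sum by membership in $\mathcal W$ and using $|N(w)\cap A|<k$ off $\mathcal W$ shows $\mathcal W$ is large provided $|N_2(v)|$ is not too large --- and it is here that the threshold for $k$ is forced, since the precise counting estimates one needs are governed by $k!\le\mathrm{poly}(d)$, i.e.\ $k=O(\log d/\log\log d)$. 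With $\mathcal W$ large, one extracts $T$ inside it: if $G[\mathcal W]$ has a vertex of $\mathcal W$-degree at least $|A|$, its $\mathcal W$-neighbourhood already works (independent in $G$, contained in $N_2(v)$, of size $\ge|A|$, each vertex sending $\ge k$ edges to $A$); if $G[\mathcal W]$ is sparse, Shearer's bound on independent sets in triangle-free graphs gives a large independent $T\subseteq\mathcal W$; the intermediate density regime requires more care. Crucially one wants to avoid simply recursing into $G[\mathcal W]$, which has strictly smaller average degree than $G$ and hence only a weaker guarantee.

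The difficulty concentrates in the case where $N_2(v)$ is enormous for \emph{every} vertex $v$ --- equivalently, $G$ is locally very sparse, essentially $K_{2,k}$-subgraph-free, the model example being a graph of large girth. There the construction above collapses: a neighbourhood together with an independent set of its second neighbourhood gives only average degree $O(1)$, because between consecutive breadth-first layers of a high-girth graph the bipartite graph is a forest, so one really needs an induced bipartite subgraph cutting across many layers while staying bipartite --- and recursion is unavailable. I expect this locally-sparse/large-girth regime to absorb essentially all of the technical work; handling it should combine K\H{o}v\'ari-S\'os-Tur\'an-type sparsity bounds with a probabilistic selection suppressing short cycles and an argument controlling the long odd cycles. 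Finally, the $\log\log d$ loss is unavoidable: the matching construction of Kwan, Letzter, Sudakov, and Tran is a pseudorandom triangle-free graph in which any vertex shares $k$ common neighbours with only about $n/k!$ other vertices, so pushing $k$ past $\log d/\log\log d$ destroys all neighbourhood-based dense bipartite subgraphs --- which both pins down the choice of $k$ and shows where a sharper argument must stop.
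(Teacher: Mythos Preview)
The paper does not prove this theorem at all: it is quoted verbatim as Theorem~1.2 of Kwan, Letzter, Sudakov, and Tran \cite{kwan2020} and used as a black box in the proof of Lemma~\ref{lem:balanced-case-to-bipartite}. There is therefore no ``paper's own proof'' to compare your proposal against.

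As for the proposal itself, it is not a proof but a plan with an explicitly admitted gap. The first part --- pass to an almost-regular subgraph, take $A=N(v)$ (independent by triangle-freeness), and look for a large independent $T\subseteq N_2(v)$ with every vertex of $T$ sending $\ge k$ edges to $A$ --- is indeed the natural starting point and is close in spirit to the argument in \cite{kwan2020}. But you yourself identify the crux and do not resolve it: when $N_2(v)$ is enormous for every $v$ (the high-girth/locally $K_{2,k}$-free regime), the neighbourhood construction collapses, and you only say you ``expect'' this case to require K\H{o}v\'ari--S\'os--Tur\'an bounds plus a probabilistic selection plus control of long odd cycles. That is precisely the nontrivial content of the theorem; without carrying it out, the proposal does not establish the result. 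If you want to complete the argument, consult \cite{kwan2020} directly --- for the purposes of the present paper, the statement is simply imported.
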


By \cref{lem:balanced-case} (with $H$ chosen to be an arbitrary bipartite graph with average degree at least $k$) and \cref{lem:bipartite-reduction}, we obtain the following, which essentially handles the ``almost regular'' outcome of \Cref{lem:regular-or-bipartite}, namely case \ref{regular}.

\begin{lemma} \label{lem:balanced-case-to-bipartite}
    For every positive integer $k$, there exists $\varepsilon=\varepsilon(k) > 0$ and a polynomial $p_k$ such that the following holds. Let $s \geq 2$ be an integer and $\Delta \geq p_k(s)$.
    Let $G$ be a $K_{s,s}$-subgraph-free graph with maximum degree at most $\Delta$ and average degree at least $\Delta^{1-\varepsilon}$. Then, $G$ has a $C_4$-subgraph-free bipartite induced subgraph with average degree at least $k$.
\end{lemma}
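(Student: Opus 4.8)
The plan is to deduce the lemma almost immediately from \cref{lem:balanced-case} and \cref{lem:bipartite-reduction}, the only real choice being which forbidden bipartite graph to feed to \cref{lem:balanced-case}. Given $k$, I would fix once and for all a bipartite graph $H$ that is itself $C_4$-subgraph-free and has average degree at least $k$; such a graph exists (for instance the point--line incidence graph of a projective plane of large enough order, which is regular of large degree and has girth $6$, or one may take any $C_4$-subgraph-free graph of average degree at least $2k$ and pass to a bipartite subgraph retaining at least half of its edges). I then set $\varepsilon(k)\coloneqq\varepsilon_H/10>0$, take $P$ to be the polynomial given by \cref{lem:balanced-case} for this $H$, and fix a constant $d_0=d_0(k)$ large enough that every triangle-free graph of average degree at least $d_0$ contains, by \cref{lem:bipartite-reduction}, an induced bipartite subgraph of average degree at least $k$ (possible since $\tfrac{C\log d}{\log\log d}\to\infty$ as $d\to\infty$). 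Finally I would set $p_k(s)\coloneqq P(s)+(2d_0)^{10/\varepsilon_H}$, which is a polynomial in $s$.

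The proof itself then splits on whether $G$ contains $H$ as an induced subgraph. If it does, that induced copy of $H$ is already bipartite, $C_4$-subgraph-free, and of average degree at least $k$, so we are done. Otherwise $G$ is $K_{s,s}$-subgraph-free, contains no induced $H$, has maximum degree at most $\Delta$, average degree at least $\Delta^{1-\varepsilon_H/10}$, and $\Delta\ge p_k(s)\ge P(s)$, so \cref{lem:balanced-case} applies and produces an induced subgraph $G'$ of $G$ of girth at least five with $d(G')\ge\tfrac12\Delta^{\varepsilon_H/10}\ge d_0$, the last inequality using $\Delta\ge(2d_0)^{10/\varepsilon_H}$.

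Now $G'$ has girth at least five, hence is triangle-free, so by the choice of $d_0$ and \cref{lem:bipartite-reduction} it has an induced bipartite subgraph $G''$ of average degree at least $k$. Since $G'$ has girth at least five it contains no $C_4$ as a subgraph at all, hence neither does its subgraph $G''$; and $G''$ is an induced subgraph of the induced subgraph $G'$ of $G$, so $G''$ is an induced subgraph of $G$. Thus $G''$ is a bipartite, $C_4$-subgraph-free induced subgraph of $G$ of average degree at least $k$, as required.

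I expect no genuine obstacle here: it is a routine combination of the two cited results. The only care needed is the constant-chasing — committing to a single exponent $\varepsilon(k)=\varepsilon_H/10$ valid for all $\Delta$, and choosing $p_k$ both large enough to exceed $P(s)$ (so \cref{lem:balanced-case} applies) and large enough in absolute terms that $\tfrac12\Delta^{\varepsilon_H/10}$ clears the threshold $d_0(k)$ that \cref{lem:bipartite-reduction} needs in order to output average degree at least $k$.
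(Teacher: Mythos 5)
Your proof is correct and takes essentially the same route as the paper: fix a $C_4$-subgraph-free bipartite $H$ with average degree at least $k$, set $\varepsilon=\varepsilon_H/10$, apply \cref{lem:balanced-case} to obtain a girth-$\ge 5$ induced subgraph of large average degree, and then apply \cref{lem:bipartite-reduction} to pass to a $C_4$-free bipartite induced subgraph of average degree at least $k$. The only cosmetic difference is that you argue directly via a case split on whether $G$ contains an induced $H$, whereas the paper argues by contradiction; your constant bookkeeping with $d_0$ is correct and plays the same role as the paper's constant $C'$.
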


\begin{proof}
    Let $H$ be some fixed bipartite $C_4$-subgraph-free graph with average degree at least $k$ (for instance the incidence graph of a projective plane of order at least $k$). Let $\varepsilon=\varepsilon_H/10$. 
    Let $C$ be the constant provided by \Cref{lem:bipartite-reduction} and let $p'$ be the polynomial provided by \cref{lem:balanced-case} for our choice of $H$. Let us set $p_k(x)= p'(x) + C'$, where $C'$ is a large enough constant to guarantee that $\displaystyle \frac{C\log \left(\Delta^{\varepsilon} / 2\right)}{\log\log \left( \Delta^{\varepsilon} / 2\right)}  \ge k$ for any $\Delta \ge C'$. 
    
    Suppose towards a contradiction that for some integer $s \geq 2$ there exists a graph $G$ which is $K_{s,s}$-subgraph-free, has maximum degree at most $\Delta \ge p_k(s)$, average degree at least $\Delta^{1-\varepsilon}$ and contains no $C_4$-subgraph-free bipartite induced subgraph with average degree at least $k$. 
    
    The last of these assumptions implies that $G$ does not contain $H$ as an induced subgraph, so we may apply \cref{lem:balanced-case} with this $H$ and our $s$ to obtain an induced subgraph $G'$ of $G$ with girth at least five and average degree at least $\frac{1}{2}\Delta^{\varepsilon}$. In particular, $G'$ is triangle-free. Applying \Cref{lem:bipartite-reduction} to $G'$, 
    we obtain an induced bipartite subgraph $G''$ of $G'$, and hence also of $G$, with average degree at least $\displaystyle \frac{C\log \left(\Delta^{\varepsilon} / 2\right)}{\log\log \left( \Delta^{\varepsilon} / 2\right)} \ge k$. Since $G''$ is an induced subgraph of $G'$, which has girth at least five, $G''$ is also $C_4$-subgraph-free, giving the desired contradiction.
\end{proof}

We will use the following lemma to deal with the very unbalanced outcome of \Cref{lem:regular-or-bipartite}, namely case \ref{bipartite}. It is a strengthening of Lemma 3.5 from \cite{du2023induced}, under the additional assumption of not having some bipartite graph $H$ as an induced subgraph.
It says that if we have a very unbalanced partition of the vertices of a $K_{s,s}$-subgraph-free, $d$-degenerate graph $G$ with no $H$ as an induced subgraph and with high density between the parts (in terms of $d$), then we can find an induced bipartite subgraph with still very unbalanced parts and in which every vertex on the larger side has degree equal to $p(d)$ where $p$ is a fixed polynomial.
The proof involves some simple cleaning of the graph, using degeneracy to ensure independence in the larger part and then subsampling the smaller part with low enough probability to ensure that it is unlikely that a subsampled vertex has any of its neighbours subsampled as well, and we simply delete any such vertices. This still allows us to show that, in expectation, a substantial proportion of the vertices in the larger part have the desired fixed degree to the smaller one.

\begin{lemma} \label{lem:bipartite-case}
    Let $H$ be a bipartite graph. 
    There exists a function $f_3$ such that the following holds.
    Let $s \geq 1, \delta \in (0, 1/2)$ and $d \geq f_3(\delta)$. Let $G_0$ be a $d$-degenerate graph with $V(G_0) = A \sqcup B$, that has no $K_{s, s}$-subgraph and no $H$ as an induced subgraph. Suppose that $|A| \geq 2^{d^{\delta}}|B|$ and $e(A, B) \geq 3\sqrt{d}|A|$.
    Then, for any $\displaystyle r \leq \min\Big\{d^{\delta/4}, \frac{d^{\varepsilon_H/2}}{4c_Hs^4}\Big\}$, we can find subsets $A' \subseteq A, B' \subseteq B$ that are both independent sets in $G_0$, with $|A'| \geq 2^{d^{\delta/2}}|B'|$ and $|N(a) \cap B'| = r$ for every $a \in A'$.
\end{lemma}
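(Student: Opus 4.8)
The plan is to argue in three stages. First, a cleaning stage prunes $A$ down to a subset on which every vertex has comparable and reasonably large degree into $B$. Then a regularisation stage uses $d$-degeneracy to pass to an independent subset of (this part of) $A$ without harming those degrees. Finally, a sparsification stage subsamples $B$ at a low rate and deletes a few subsampled vertices, so that the surviving set $B'$ is independent while still many vertices of $A$ have exactly $r$ neighbours in it.

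For the cleaning stage, deleting from $A$ every vertex with fewer than $\sqrt d$ neighbours in $B$ destroys at most $\sqrt d|A|$ of the $\ge 3\sqrt d|A|$ edges, so the remaining set $A_1$ has $e(A_1,B)\ge 2\sqrt d|A|$ and $d_B(a)\ge\sqrt d$ for all $a\in A_1$. I then sort $A_1$ into degree classes $A^{(j)}=\{a\in A_1 : 2^{j}\le d_B(a)<2^{j+1}\}$ and pass to a single class $A^{(j^*)}$ carrying a large share of $e(A_1,B)$. The delicate point is which class to keep: $2^{j^*}$ must be large enough — of order $r^2 d$ — so that the deletion step below does not destroy too many of the degree-$r$ neighbourhoods, while the number of degree classes in play must simultaneously be kept under control. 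This is where $K_{s,s}$-freeness, \cref{thm:induced-turan-type}, and the hypotheses $|A|\ge 2^{d^\delta}|B|$ and $r\le\min\{d^{\delta/4},\ d^{\varepsilon_H/2}/(4c_Hs^4)\}$ enter: $K_{s,s}$-freeness bounds the number of very high degree vertices of $A$ (by a standard double count, at most $(s-1)(2|B|/D)^{s}$ of them have degree at least $D$), \cref{thm:induced-turan-type} keeps the whole configuration from being too sparse, and the huge gap between $2^{d^\delta}$ and $2^{d^{\delta/2}}$ leaves room to absorb the polynomial-in-$d$ and $\binom{\cdot}{r}$-type losses while still ending with the required ratio. (Note that $r\le d^{\varepsilon_H/2}/(4c_Hs^4)$ forces $s\le d^{\varepsilon_H/8}$, so all $s$-dependent quantities are themselves bounded by small powers of $d$.)

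For the regularisation stage, $G_0[A^{(j^*)}]$ is $d$-degenerate and hence properly $(d+1)$-colourable; I take the colour class $A_2$ maximising the number of edges into $B$, so $A_2$ is independent in $G_0$ with $e(A_2,B)\ge e(A^{(j^*)},B)/(d+1)$, and — crucially — removing vertices from the $A$-side leaves $d_B(a)$ unchanged, so every $a\in A_2$ still has $2^{j^*}\le d_B(a)<2^{j^*+1}$. For the sparsification stage, fix a degeneracy ordering $\prec$ of $G_0$ and let $B^*$ contain each vertex of $B$ independently with probability $q:=r/2^{j^*+1}$, so $\mathbb{E}[|N(a)\cap B^*|]\in[r/2,r)$ for $a\in A_2$. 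Put $B':=\{b\in B^* : b\text{ has no }\prec\text{-earlier neighbour in }B^*\}$; then $B'$ is independent, and since each vertex has at most $d$ earlier neighbours, a subsampled vertex lies outside $B'$ with conditional probability at most $qd$. A direct binomial estimate, using $\binom{D}{r}\ge\left(\tfrac{D}{2r}\right)^{r}$ (valid as $D=d_B(a)\ge 2^{j^*}\gg r$) and $(1-q)^{D}\ge e^{-2qD}$, gives $\mathbb{P}[|N(a)\cap B^*|=r]\ge(4e^{2})^{-r}$ for every $a\in A_2$; conditioning on that event and union-bounding over the $r$ subsampled neighbours of $a$, with probability at least $\tfrac12$ none of them is deleted, provided $2^{j^*}\gtrsim r^2 d$ — exactly the lower bound arranged in the cleaning stage. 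Hence $\mathbb{P}[|N(a)\cap B'|=r]\ge\tfrac12(4e^{2})^{-r}$, so by linearity the expected number of $a\in A_2$ with $|N(a)\cap B'|=r$ is at least $\tfrac12(4e^{2})^{-r}|A_2|$, while $\mathbb{E}|B'|\le q|B|$. The cleaning-stage bookkeeping is arranged so that $\tfrac12(4e^{2})^{-r}|A_2|>2^{d^{\delta/2}}q|B|$, hence some outcome of the sample has $\#\{a\in A_2 : |N(a)\cap B'|=r\}>2^{d^{\delta/2}}|B'|$; taking $A'$ to be this set and $B'$ as constructed proves the lemma, as $A'\subseteq A_2$ and $B'$ are both independent, $|A'|\ge 2^{d^{\delta/2}}|B'|$, and every $a\in A'$ has exactly $r$ neighbours in $B'$.

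The main obstacle is the tension in the last stage between the two demands on the sampling rate $q$: it must be roughly $r$ divided by the typical $B$-degree of the $A$-vertices so that a surviving $A$-vertex ends up with degree exactly $r$, but it must also be at most about $1/(rd)$ so that the degeneracy-based cleanup making $B'$ independent does not destroy those degree-$r$ neighbourhoods. The two are compatible only inside a degree class with $2^{j^*}$ of order at least $r^2 d$, so the real work is the cleaning stage — pruning $A$, discarding the very high degree classes via $K_{s,s}$-freeness, and using \cref{thm:induced-turan-type} together with the bound on $r$ to guarantee that a usable class exists and is still, in absolute terms, large enough to beat $2^{d^{\delta/2}}|B'|$ at the end.
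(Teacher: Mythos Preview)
Your three-stage outline mirrors the paper's, and the cleaning and regularisation stages are fine. The gap is in the sparsification stage, specifically in the sentence ``provided $2^{j^*}\gtrsim r^2 d$ --- exactly the lower bound arranged in the cleaning stage.'' This lower bound cannot be arranged. By $d$-degeneracy, if $A^{\mathrm{high}}\subseteq A$ is the set of vertices with $d_B(a)\ge 2d$, then $2d\,|A^{\mathrm{high}}|\le e(A^{\mathrm{high}},B)\le d(|A^{\mathrm{high}}|+|B|)$, so $|A^{\mathrm{high}}|\le |B|$ and $e(A^{\mathrm{high}},B)\le 2d|B|$. Thus \emph{every} dyadic class with $2^{j}\ge 2d$ has at most $|B|$ vertices combined --- vastly less than the $2^{d^{\delta/2}}|B|$ you need at the end --- and these classes carry only a negligible fraction of $e(A_1,B)\ge 2\sqrt d\,|A|\ge 2\sqrt d\cdot 2^{d^\delta}|B|$. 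So any class with a large edge share has $2^{j^*}\le 2d$, and then $rqd=r^2 d/2^{j^*+1}\ge r^2/4$, which is $\ge 1$ already for $r\ge 2$; your union bound for survival collapses. (There is a second, related issue you do not address: conditioned on $N(a)\cap B^*=\{b_1,\dots,b_r\}$, if any two $b_i,b_j$ are adjacent in $G_0[B]$ then one of them is deleted with probability~$1$, so the ``$\le qd$'' per-vertex bound is simply false unless the sampled $r$-set is independent.)

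The paper resolves exactly the tension you identify, but in the opposite direction: instead of pushing the degree class high enough that a moderate sampling rate works, it drops the rate to $p=1/d^2$ (so $pd=1/d$ and survival is automatic) and gives up on the binomial hitting $r$ by chance. Instead, for each $a$ it \emph{pre-selects} a specific independent set $I_a\subseteq N(a)$ of size $r$ and asks for the event $\{N(a)\cap B_0=I_a\}\cap\bigcap_{b\in I_a}\{N_D^+(b)\cap B_0=\emptyset\}$, which has probability $\ge p^{r}(1-p)^{d^2/2}\ge \tfrac12 d^{-2r}\ge 2^{-d^{\delta/2}}$. The induced Tur\'an theorem is what guarantees such an $I_a$ exists: $G[N(a)]$ is $K_{s,s}$-free with no induced $H$, hence has average degree $\le 2c_H s^4 |N(a)|^{1-\varepsilon_H}$, hence an independent set of size $\ge |N(a)|^{\varepsilon_H}/(4c_H s^4)\ge d^{\varepsilon_H/2}/(4c_H s^4)\ge r$. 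This is the actual role of the hypothesis $r\le d^{\varepsilon_H/2}/(4c_H s^4)$ --- it is not that the theorem ``keeps the configuration from being too sparse'' (the theorem is an upper bound on edges, not a lower bound), but that it supplies the independent target set inside each neighbourhood.
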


\begin{proof}
    We define $f_3$ to be a sufficiently fast-growing function to guarantee the following:
    for each $\delta \in (0, 1/2)$ and $d \ge f_3(\delta)$ all of the following conditions are true: 
    \begin{enumerate}[(i)]
        \item $d^\delta \geq 2d^{\delta/2} + \frac{3}{2}\log(d) + \log(20)$,
        \label{f3:log20}
        \item $d/2 \geq 10+d^{\delta/4}$, and
        \label{f3:halfd}
        \item $d^{\delta/2}\ge 2d^{\delta/4}\log(d) + 1$. 
        \label{f3:manyexponents}
    \end{enumerate}
    Let $s \geq 1$, $\delta \in (0, 1/2)$ and $d \ge f_3(\delta)$ be fixed and let $G_0, A, B$ be defined with respect to $s, \delta$ and $d$ as in the statement of \cref{lem:bipartite-case} and fix $\displaystyle r \leq \min\Big\{d^{\delta/4}, \frac{d^{\varepsilon/2}}{4cs^4}\Big\}$.
    Let $\varepsilon=\varepsilon_H$ and $c = c_H$.
    We start by ``cleaning'' $G_0$ so that no vertex in $A$ has too large degree or too low degree with respect to $B$.
    Let ${A^+ = \{a \in A \ | \ d_B(a) \geq 10d\}}$. Then, ${e(A^+, B) \geq 10d|A^+|}$.
    Since $G_0$ is $d$-degenerate, we also have ${e(A^+, B) \leq d(|A^+| + |B|)}$, therefore ${|A^+| \leq |B|}$, hence ${e(A^+, B) \leq 2d|B|}$.
    Let $A^{-} = \{a \in A \ | \ d_B(a) \leq \sqrt{d}\}$. Then, $e(A^{-}, B) \leq \sqrt{d}|A^{-}| \leq \sqrt{d}|A|$. 
    Let $A_1 = A \setminus (A^+ \cup A^{-})$.
    We have 
    $e(A_1, B) = e(A, B) - e(A^+, B) - e(A^-, B)
     \geq 3\sqrt{d}|A| - 2d|B| - \sqrt{d}|A|= 2\sqrt{d}|A| - 2{d}|B|$. Property \ref{f3:log20} implies
     $2^{d^{\delta}} \geq 2\sqrt{d}$, thus ${\sqrt{d} \cdot |A| \geq \sqrt{d} \cdot 2^{d^{\delta}}|B| \geq 2d|B|}$.
     Hence, we obtain that $e(A_1, B) \geq 2\sqrt{d}|A| - 2d|B| \geq \sqrt{d}|A|$.
    Furthermore, for every $a \in A_1$, we have $d_B(a) \in [\sqrt{d}, 10d]$. 
    Thus, $|A_1| \geq e(A_1, B)/(10d) \geq |A|/(10\sqrt{d})$.

    Since $G_0$ is $d$-degenerate, $\chi(G_0) \leq d+1$ so there exists an independent set $A_0 \subseteq A_1$ of size $|A_0|\ge |A_1|/(d+1) \geq |A_1|/(2d) \geq |A|/(20d\sqrt{d}) \geq 2^{d^{\delta}}|B| / (20d\sqrt{d})\ge 2^{2d^{\delta / 2}} |B|$ where the last inequality follows from \ref{f3:log20}.

    Let $G = G_0[A_0 \cup B]$. Since $G_0$ is $d$-degenerate, there exists an orientation $D$ of $G_0[B] = G[B]$ such that $|N_D^+(b)| \leq d$ for every $b \in B$ ($N_D^+(b)$ represents the set of out-neighbours of $b$ with respect to $D$). 
    Let $B_0$ be a random subset of $B$ containing each vertex independently with probability $1/d^2$.
    Let $B' = \{b \in B_0, N_D^+(b) \cap B_0 = \emptyset\}$. Then, $B'$ is an independent set of $G_0$.

    Let $a \in A_0$. 
    Let $d_a$ denote $|N_G(a)|$ ($= |N_B(a)|$). Then, $\sqrt{d} \leq d_a \leq 10{d}$ since $a \in A_0 \subseteq A_1$ and $A_0$ is an independent set. Since $G[N_G(a)]$ contains no $K_{s, s}$ subgraph and no induced copy of $H$, it has average degree at most $2cs^4(d_a)^{1-\varepsilon}$ by \cref{thm:induced-turan-type}.
    It is a classical result that every graph on $n$ vertices with average degree $q$ contains an independent set of size at least $n / ({2q})$ (see eg. \cite[{Theorem 3.2.1}]{alon2016probabilistic}).
    Thus, $G[N_G(a)]$ contains an independent set of size at least $\displaystyle \frac{d_a}{4cs^4(d_a)^{1-\varepsilon}} = \frac{(d_a)^{\varepsilon}}{4cs^4} \geq \frac{d^{\varepsilon/2}}{4cs^4} \geq r$ by definition of $r$. 
    For every $a \in A_0$, we fix an independent set $I_a$ of size $r$ in $G[N_G(a)]$ for the rest of the proof.

    For $a \in A_0$, let $\mathcal{E}_a$ be the event that $[N_G(a) \cap B' = N_G(a) \cap B_0 = I_a]$.
   Let $X_a = N_G(a) \setminus I_a$ and let $Y_a = \bigcup_{b \in I_a}N_D^+(b)$. 
    Note that $\mathcal{E}_a$ happens if and only if all elements of $I_a$ are put in $B_0$ and no element from $X_a 
    \cup Y_a$ is put in $B_0$.
    Recall that $d_a \leq 10d$.
    Hence, $|X_a \cup Y_a| \leq 10d + rd \le (10+d^{\delta/4})d \leq d^2/2$ where the last inequality follows from property \ref{f3:halfd}.
    Hence, using the fact that for every $x \geq -1$ and $m \geq 1$, we have $(1+x)^m \geq 1+mx$, we get: \begin{align*} 
        \mathbb{P}[\mathcal{E}_a] = p^{|I_a|}(1-p)^{|X_a \cup Y_a|} \geq \frac{1}{d^{2r}}\left(1 - \frac{1}{d^2}\right)^{d^2/2} \geq \frac{1}{2d^{2r}}\geq 2^{-1 -2r\log d} \geq 2^{-1 -2d^{\delta / 4} \log d} \geq 2^{-d^{\delta/2}}.
    \end{align*}
    Here, the last inequality follows from property \ref{f3:manyexponents}. 
    Set $A' = \{a \in A_0, \mathcal{E}_a \text{ holds}\}$.
    Then, $$\mathbb{E}[|A'|] \geq |A_0|2^{-d^{\delta/2}} \geq 2^{2d^{\delta/2}}\cdot 2^{-d^{\delta/2}}|B| = 2^{d^{\delta/2}}|B| \geq 2^{d^{\delta/2}}|B'|.$$
    
    Hence, there exists a choice of $A', B'$ satisfying the desired properties. This concludes the proof. 
\end{proof}

We are now ready to prove the following. It summarizes our current progress in both outcomes of \Cref{lem:regular-or-bipartite} and as we will see immediately after it strengthens and implies \cref{thm:McCarty}.

\begin{theorem}\label{thm:bipartite-degree-boundedness}
    Let $k$ be a positive integer. There exists $\varepsilon = \varepsilon(k) > 0$ and a polynomial $P_k$ with the following property.
    For any two positive integers $s$ and $d \ge P_k(s)$,  
    every $K_{s,s}$-subgraph-free graph with maximum average degree $d$ contains either a $C_4$-subgraph-free bipartite induced subgraph with average degree at least $k$, or an induced bipartite subgraph $G'=(A',B')$ with $|A'| \geq 2^{d^{2\varepsilon}} |B'|$, and $|N(a) \cap B'| = \lfloor d^\varepsilon/s^4 \rfloor$ for every $a \in A'$.
\end{theorem}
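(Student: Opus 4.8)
The plan is to combine the three structural lemmas of this section, feeding the two outcomes of \cref{lem:regular-or-bipartite} into \cref{lem:balanced-case-to-bipartite} and \cref{lem:bipartite-case} respectively, with a single bipartite graph $H$ fixed at the outset. First I would fix $H$ to be a bipartite $C_4$-subgraph-free graph of average degree at least $k$ (for instance the incidence graph of a projective plane of order $\ge k$), with constants $\varepsilon_H,c_H$ from \cref{thm:induced-turan-type}, and let $\varepsilon_1=\varepsilon_1(k)>0$ and $p_k$ be the constant and polynomial from \cref{lem:balanced-case-to-bipartite}. Then I choose a descending chain of positive constants depending only on $k$: $\gamma:=\min\{\varepsilon_1/8,\,1/6\}\in(0,1/5)$, then $\delta:=\gamma/2$, and finally $\varepsilon:=\min\{\delta/4,\,\varepsilon_H/4\}$, which will be the $\varepsilon(k)$ of the statement. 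For $P_k$ I take a polynomial that dominates $p_k$ and the function $s\mapsto s^{\lceil 4/\varepsilon\rceil}$ (so that $\lfloor d^\varepsilon/s^4\rfloor\ge 1$ whenever $d\ge P_k(s)$) and that exceeds the finitely many absolute constants needed below, namely $f_1(\gamma)$, $f_3(\delta)$, and thresholds making $d^\gamma-2\ge d^\delta$, $d/4\ge 3\sqrt d$, and $4c_H\le d^{\varepsilon_H/2-\varepsilon}$ hold.

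Now let $\Gamma$ be a $K_{s,s}$-subgraph-free graph with maximum average degree $d\ge P_k(s)$; we may assume $s\ge 2$, since for $s=1$ the graph is edgeless and the statement is vacuous as $d\ge 1$. Let $G_0$ be an induced subgraph of $\Gamma$ with $d(G_0)=d$; being induced in $\Gamma$, $G_0$ has maximum average degree $d$ and hence is $d$-degenerate. The one genuinely load-bearing observation is that if $G_0$ contains $H$ as an induced subgraph, then $H$ is itself a $C_4$-subgraph-free bipartite induced subgraph of $\Gamma$ of average degree $\ge k$ and we are done; so I may assume $G_0$ has no induced $H$. Apply \cref{lem:regular-or-bipartite} to $G_0$ with parameter $\gamma$. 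In case \ref{regular} we obtain an induced subgraph $G^*$ with $d(G^*)\ge 6d^{1-5\gamma}$ and $\Delta(G^*)\le\Delta:=6d^{1+3\gamma}$. The choice $\gamma\le\varepsilon_1/8$ forces the exponent inequality $1-5\gamma\ge(1+3\gamma)(1-\varepsilon_1)$, whence $d(G^*)\ge 6d^{1-5\gamma}\ge(6d^{1+3\gamma})^{1-\varepsilon_1}=\Delta^{1-\varepsilon_1}$, and also $\Delta\ge d\ge p_k(s)$; so \cref{lem:balanced-case-to-bipartite} applied to $G^*$ gives a $C_4$-subgraph-free bipartite induced subgraph of average degree $\ge k$, which, being induced in $G^*$, is induced in $\Gamma$ — the first outcome.

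In case \ref{bipartite} we obtain a partition $V(G_0)=A\sqcup B$ with $e(A,B)\ge|G_0|\,d/4$ and $|A|\ge 2^{d^\gamma-2}|B|$. Since $|A|\le|G_0|$ and $d/4\ge 3\sqrt d$ we get $e(A,B)\ge 3\sqrt d\,|A|$, and $d^\gamma-2\ge d^\delta$ gives $|A|\ge 2^{d^\delta}|B|$; together with $d\ge f_3(\delta)$ this meets the hypotheses of \cref{lem:bipartite-case} (for our $H$, with parameter $\delta$). I apply it with $r:=\lfloor d^\varepsilon/s^4\rfloor$, noting that the choices $\varepsilon\le\delta/4$, $\varepsilon<\varepsilon_H/2$ and $d\ge s^{\lceil 4/\varepsilon\rceil}$ ensure $1\le r\le\min\{d^{\delta/4},\,d^{\varepsilon_H/2}/(4c_Hs^4)\}$. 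The lemma then produces independent sets $A'\subseteq A$ and $B'\subseteq B$ of $G_0$ with $|A'|\ge 2^{d^{\delta/2}}|B'|\ge 2^{d^{2\varepsilon}}|B'|$ (using $2\varepsilon\le\delta/2$) and $|N(a)\cap B'|=r$ for every $a\in A'$. As $G_0$ is induced in $\Gamma$ and $A',B'$ are independent in $G_0$, the graph $\Gamma[A'\cup B']$ is an induced bipartite subgraph with bipartition $(A',B')$ in which every vertex of $A'$ has exactly $\lfloor d^\varepsilon/s^4\rfloor$ neighbours in $B'$ — the second outcome.

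Apart from the preliminary reduction ruling out an induced $H$, this is bookkeeping on top of the three lemmas. The only step that genuinely constrains the parameters is the exponent inequality $1-5\gamma\ge(1+3\gamma)(1-\varepsilon_1)$ in case \ref{regular}: it forces $\gamma$, and hence $\delta$ and the final $\varepsilon$, to be small in terms of the constant $\varepsilon_1$ from \cref{lem:balanced-case-to-bipartite}. Everything else amounts to choosing $P_k$ large enough to absorb the polynomial $p_k$, the power $s^{\lceil 4/\varepsilon\rceil}$ that makes $r\ge 1$, and the finitely many ``$d$ large enough'' constants coming from \cref{lem:regular-or-bipartite}, \cref{lem:bipartite-case}, and the various exponent comparisons.
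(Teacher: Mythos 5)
Your proof follows the same route as the paper's: fix a $C_4$-free bipartite $H$ of average degree $\ge k$, dispose of the case where it appears as an induced subgraph, pass to an induced subgraph $G_0$ achieving the maximum average degree, apply \cref{lem:regular-or-bipartite}, and feed outcome \ref{regular} to \cref{lem:balanced-case-to-bipartite} and outcome \ref{bipartite} to \cref{lem:bipartite-case} with $r=\lfloor d^\varepsilon/s^4\rfloor$. The only differences are cosmetic bookkeeping (you apply \cref{lem:balanced-case-to-bipartite} with $\Delta = 6d^{1+3\gamma}$ rather than $\Delta(G^*)$, which slightly simplifies $P_k$, and your constants $\gamma,\delta,\varepsilon$ differ numerically from the paper's but satisfy the same inequalities), so the argument is correct and essentially identical.
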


\begin{proof}
    Let $\varepsilon_k,p_k(s)$ be provided by \Cref{lem:balanced-case-to-bipartite}. 
    Let $H$ be a fixed $C_4$-subgraph-free bipartite graph with average degree at least $k$ (for instance the incidence graph of a projective plane of order at least $k$).  We set $\varepsilon=\varepsilon(k)=\min\{\varepsilon_k/40, \varepsilon_H/3\}$.
    Let $c = c_H$, $\gamma = \varepsilon_k/8$, $\delta = \varepsilon_k/10$ and let $C$ be a sufficiently large constant (depending only on $k$) so that $C \ge f_1(\gamma), f_3(\delta), 144,(4c)^{6/\varepsilon_H}$ and so that $x^{\gamma} \ge x^{\delta}+2$ for any $x \ge C$. (Recall, $f_1, f_3$ are defined in \cref{lem:regular-or-bipartite} and in \cref{lem:bipartite-case}, respectively.)
    We set $\displaystyle P_k(s) \coloneqq |p_k(s)|^{1/(1-5\gamma)}+C.$

The result is trivial for $s < 2$.
Let $s \geq 2, d \geq P_k(s)$ and $G$ be a graph with maximum average degree $d$ that does not contain $K_{s, s}$ as a subgraph. If $G$ contains $H$ as an induced subgraph we are done, so we may assume it does not.

Let $G_0$ be an induced subgraph of $G$ achieving maximum average degree, i.e. $d(G_0) = d \geq P_k(s)$. $G_0$ is $d$-degenerate by maximality of $d$. 
Note that $\gamma < 1/5$ and $d \geq f_1(\gamma)$.
By applying \cref{lem:regular-or-bipartite} we obtain that either \ref{regular}
or \ref{bipartite} holds for $\gamma, d$.

If case \ref{regular} occurs, $G_0$ contains an induced subgraph $G^*$ of average degree at least $6d^{1-5\gamma}$ and maximum degree $\Delta(G^*) \leq 6d^{1+3\gamma}$. So, $d(G^*) \ge 6d^{1-5\gamma} \ge (6d^{1+3\gamma})^{1-\varepsilon_k}\ge (\Delta(G^*))^{1-\varepsilon_k}$ where the second inequality follows from the fact that $\gamma < \frac{1}{5}$.
Moreover, $\Delta(G^*) \ge 6d^{1-5\gamma} \ge p_k(s)$. 
So we may apply \Cref{lem:balanced-case-to-bipartite} with $\Delta = \Delta(G^*)$ to find a $C_4$-subgraph-free bipartite induced subgraph with average degree at least $k$, as desired.

If case \ref{bipartite} occurs, there is a partition $V(G_0) = A \sqcup B$ such that $e(A, B) \geq |V(G_0)|d/4 \geq |A|d/4 \ge 3\sqrt{d}|A|$ 
and $|A| \geq 2^{d^{\gamma}-2}|B| \ge 2^{d^{\delta}}|B|$.
Since $G_0$ is $d$-degenerate and has no $H$ as an induced subgraph or $K_{s,s}$ as a subgraph, and $\displaystyle r \coloneqq \lfloor d^\varepsilon/s^4 \rfloor \le \min\Big\{d^{\delta/4}, \frac{d^{\varepsilon_H/2}}{4c_Hs^4}\Big\}$ (using that $\varepsilon \leq \varepsilon_k/40 = \delta/4$, and $d \geq P_k(s) \geq C \geq (4c_H)^{6/\varepsilon_H}$ so $d^{\varepsilon} \leq d^{\varepsilon_H/3} = d^{\varepsilon_H/2}/d^{\varepsilon_H/6} \leq d^{\varepsilon_H/2}/(4c_H)$) we may apply \Cref{lem:bipartite-case} to find subsets $A' \subseteq A, B' \subseteq B$ that are both independent sets in $G_0$, with $|A'| \geq 2^{d^{\delta/2}}|B'|\ge 2^{d^{2\varepsilon}}|B'|$ and $|N(a) \cap B'| = \lfloor d^\varepsilon/s^4 \rfloor$ for every $a \in A'$, as desired.
\end{proof}

Now, we quickly show that \cref{thm:bipartite-degree-boundedness} implies \cref{thm:McCarty}, thus answering McCarty's \cite{rose-matrix} question on polynomial degree bounding functions.

\begin{proof}[Proof of \cref{thm:McCarty}.]
    Let $\mathcal{G}$ be a hereditary class of graphs for which there is a polynomial $p$ such that every \emph{bipartite} $K_{s,s}$-subgraph-free graph in $\mathcal{G}$ has average degree at most $p(s)$.
    Let $k=p(2)+1$, and let $\varepsilon_k>0, P_k$ be given by \cref{thm:bipartite-degree-boundedness}.
    Set $p'(s)=(2s^4p(s))^{1 / \varepsilon_k} + P_k(s)$.
    Towards a contradiction, suppose there exists a $K_{s,s}$-subgraph-free graph $G \in \mathcal{G}$ with average degree more than $p'(s)$. Then, $G$ has maximum average degree $d > p'(s)$. Since $k > p(2)$ and $\mathcal{G}$ is hereditary, $G$ cannot contain a $C_4$-subgraph-free bipartite induced subgraph with average degree at least $k$. Then, by \cref{thm:bipartite-degree-boundedness}, $G$ contains an induced bipartite subgraph $G'=(A',B')$ with $|A'| \geq 2^{d^{2\varepsilon_k}}|B'| \geq |B'|$ and $|N(a) \cap B'| = \lfloor d^{\varepsilon_k}/s^4 \rfloor$ for every $a \in A'$. Note that $G' \in \mathcal{G}$ and $G'$ is $K_{s, s}$-subgraph-free. Furthermore, $e(G') = \lfloor d^{\varepsilon_k}/s^4 \rfloor \cdot |A'| \geq \frac{1}{2}\lfloor d^{\varepsilon_k}/s^4 \rfloor \cdot |A' \cup B'|$ so $G'$ has average degree at least $\lfloor d^{\varepsilon_k}/s^4 \rfloor > p(s)$, a contradiction.
\end{proof}

\section{Induced subdivisions}\label{sec:consequence}

In this section, we prove \cref{thm:poly-kuhn-osthus} and \cref{thm:chi-bound}.
We begin with \cref{thm:poly-kuhn-osthus}.
First, we need to gather a few more well-known results.

The outcome of \cref{thm:bipartite-degree-boundedness} in which we find an induced bipartite $C_4$-subgraph-free subgraph with large average degree can be handled by the special case ($s=2$) of Theorem 1 of Kühn and Osthus \cite{Kuhn_Osthus00} stating that $C_4$-subgraph-free graphs containing no induced subdivision of a graph $H$ have bounded average degree.

\begin{lemma}[\protect{Theorem 1 of \cite{Kuhn_Osthus00}}] \label{lem:kuhn-osthus-C4}
    Let $H$ be a graph. There exists $k=k(H)$ such that any $C_4$-subgraph-free graph containing no induced subdivision of $H$ has average degree less than $k$.
\end{lemma}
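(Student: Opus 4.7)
The plan is to show that any $C_4$-subgraph-free graph with sufficiently large minimum degree $\delta$ (in terms of $|H|$) contains an induced subdivision of $H$; combined with the standard fact that a graph of average degree $d$ has a subgraph of minimum degree at least $d/2$, this will yield the lemma for $k = k(H)$ sufficiently large.

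Two structural consequences of $C_4$-freeness will drive the argument. First, any two vertices share at most one common neighbor, so the BFS layers $L_i$ from any root $v_0$ expand rapidly: every vertex in $L_{i+1}$ has a unique neighbor in $L_i$, which yields $|L_{i+1}| \gtrsim \delta \cdot |L_i|$ up to lower-order corrections, so the ball of radius $r$ around any vertex contains on the order of $\delta^r$ vertices. Second, for any induced path $P$ and any vertex $v \notin V(P)$, $v$ has at most two neighbors on $P$ and, if there are two, they must be consecutive on $P$ (otherwise $v$ together with an intermediate vertex of $P$ would close a $C_4$). Thus each ``old'' vertex obstructs only a small number of future candidate paths.

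With these in hand, I would build an induced subdivision of $H$ greedily. First, I would pick branch vertices $x_1, \ldots, x_{|H|} \in V(G)$ pairwise at distance more than some constant depending only on $|H|$. Then I would process the edges of $H$ one at a time: for each edge $u_au_b$, use the BFS expansion to find many candidate induced $x_a$-to-$x_b$ paths in $G$ avoiding the previously built structure, and then use the second structural fact above to prune those candidates that have a chord to an earlier path or branch vertex. The expansion overwhelms the number of obstructions, so a ``clean'' induced path survives at each step.

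The main obstacle is this last pruning step: one must balance the exponential BFS expansion against the accumulated forbidden vertices arising from previously constructed paths. This will require choosing the branch vertices and the paths ``far enough apart'' so that the BFS ball within which we search is only mildly contaminated by the earlier structure. Making this precise should yield a bound $k(H)$ which is at worst a large polynomial (or exponential, depending on how carefully one argues) in $|H|$, matching what Kühn and Osthus obtain in their original proof.
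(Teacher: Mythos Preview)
The paper does not prove this lemma at all: it is quoted as (the $s=2$ case of) Theorem~1 of K\"uhn--Osthus and used as a black box, so there is no argument in the paper to compare your sketch against.

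Evaluating your sketch on its own merits, it rests on two structural claims that are false as stated. First, ``every vertex in $L_{i+1}$ has a unique neighbour in $L_i$'' holds only for $i\le 1$; for $i\ge 2$ a vertex of $L_{i+1}$ may well have several neighbours in $L_i$ (as long as those neighbours have pairwise distinct $L_{i-1}$-parents no $C_4$ arises), so the clean layer-by-layer $\delta$-expansion you describe is not available beyond two levels. Second, $C_4$-freeness does \emph{not} force an off-path vertex $v$ to have at most two neighbours on an induced path $P$: it only forbids $v$ from having two neighbours at distance exactly~$2$ along $P$ (that is the one configuration where an intermediate path-vertex closes a $4$-cycle). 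Nothing stops $v$ from being adjacent to, say, $p_1,p_4,p_7,\ldots$, so the ``each old vertex blocks only $O(1)$ candidates'' step --- which is the crux of your pruning --- collapses.

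The underlying philosophy (common neighbourhoods have size at most one, hence some expansion, hence greedily route induced paths between branch vertices) is the right instinct and is indeed the spirit of the original K\"uhn--Osthus argument, but turning it into a proof is substantially more delicate than your outline suggests. In particular you must also handle $C_4$-free graphs of small diameter: a polarity graph is $C_4$-free, roughly $q$-regular on about $q^2$ vertices, and has diameter~$2$, so the picture ``place branch vertices far apart and let BFS run for many rounds'' is simply unavailable in general.
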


To handle the very unbalanced case of \cref{thm:bipartite-degree-boundedness}, we shall also take inspiration from \cite{Kuhn_Osthus00}.
We will need the following classical result of Kühn and Osthus. Roughly speaking, it states that if we have an unbalanced bipartite graph $G'$ where each vertex on the ``big'' side of $G'$ has large degree, then $G'$ contains a large complete bipartite subgraph or an induced 1-subdivision of some graph of large average degree.

\begin{lemma}[\protect{Corollary 19 of \cite{Kuhn_Osthus00}}] \label{lem:kuhn-osthus}
  Let $\alpha \in \mathbb{N}$ and $\beta \geq 8\alpha$. Let $G' = (A', B')$ be a bipartite graph such that {$|A'| \geq \beta^{12\alpha}|B'|$} and $\beta/4 \leq d(a) \leq 4\beta$ for every $a \in A'$. Then, $G'$ contains either a $K_{\alpha, \alpha}$-subgraph or an induced 1-subdivision of some graph $F$ with {$d(F) \geq \beta^9/2^{14}$.}
\end{lemma}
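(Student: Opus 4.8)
\emph{The strategy.} Because in a $1$-subdivision every subdivision vertex has degree exactly two, my plan is to build the sought graph $F$ with its branch vertices inside $B'$ and its subdivision vertices inside $A'$ (the side where we control degrees). Once a set $W\subseteq B'$ of candidate branch vertices is fixed, a vertex $a\in A'$ can play the role of the subdivision vertex of the pair $N(a)\cap W$ exactly when $|N(a)\cap W|=2$; selecting one such $a$ for each distinct pair that arises yields, together with $W$, an \emph{induced} $1$-subdivision, since both $W\subseteq B'$ and the selected $a$'s form independent sets and no selected $a$ has a neighbour in $W$ outside its own pair. A branch vertex $w\in B'$ then has $F$-degree equal to the number of selected subdivision vertices incident to it, and as we have no upper bound on $B'$-degrees nothing prevents $F$ from being dense. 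So I will try to produce a $W$ realising many distinct pairs, and argue that when this fails $G'$ must contain $K_{\alpha,\alpha}$.

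\emph{Cleaning and the complete-bipartite alternative.} First I would discard isolated vertices of $B'$ and, by dyadic pigeonholing the degrees of $A'$-vertices in the factor-$16$ window $[\beta/4,4\beta]$, pass to $A_1\subseteq A'$ with $|A_1|\ge|A'|/4$ on which all degrees lie in a window $[\beta_0,2\beta_0]$ with $\beta_0\ge\beta/4$; this changes $|A'|/|B'|$ by at most a constant factor. Next I would count the incidences $(S,a)$ with $S\in\binom{B'}{\alpha}$, $a\in A_1$ and $S\subseteq N(a)$: this equals $\sum_{a\in A_1}\binom{d(a)}{\alpha}\ge|A_1|\binom{\lfloor\beta/4\rfloor}{\alpha}$. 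If it exceeds $(\alpha-1)\binom{|B'|}{\alpha}$, some $\alpha$-subset of $B'$ has at least $\alpha$ common neighbours in $A_1$, giving $K_{\alpha,\alpha}$ and we are done. Otherwise, using $|A_1|\gtrsim\beta^{12\alpha}|B'|$, $\binom{\lfloor\beta/4\rfloor}{\alpha}\ge(\beta/8\alpha)^{\alpha}$ and $\beta\ge8\alpha$, a short computation forces $|B'|\ge\beta^{12}$, so from now on $B'$ has ample room for a large branch set.

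\emph{Building the subdivision.} I would then sample $W\subseteq B'$ by including each vertex independently with probability $p=1/\beta$. Since $p\,d(a)=\Theta(1)$ for $a\in A_1$, the probability that $a$ is \emph{useful}, meaning $|N(a)\cap W|=2$, is at least an absolute constant $q_0>0$, so the useful set $A''$ satisfies $\mathbb{E}|A''|\ge q_0|A_1|$ and I may fix an outcome with $|A''|\ge q_0|A_1|/2$. Let $F$ be the graph on the non-isolated endpoints of the claimed pairs $\{\,N(a)\cap W:a\in A''\,\}$; then $V(F)\subseteq W\subseteq B'$, and choosing one useful vertex per edge of $F$ gives an induced $1$-subdivision of $F$ in $G'$ with $d(F)=2|E(F)|/|V(F)|\ge 2|E(F)|/|B'|$. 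Hence we are done as soon as at least $\tfrac{\beta^{9}}{2^{15}}|B'|$ distinct pairs are claimed, with room to spare for the constant $2^{14}$.

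\emph{The obstacle.} The only remaining case is that the useful vertices cluster onto fewer than $\tfrac{\beta^{9}}{2^{15}}|B'|$ pairs, so some pair $\{x,y\}\subseteq W$ has a set $A_{xy}$ of $\gtrsim\beta^{12\alpha-9}$ common neighbours, each adjacent to $x$ and $y$ and to nothing else in $W$ --- i.e.\ a very dense $K_{2,\mathrm{large}}$-type patch. The crux is to upgrade this patch to a genuine $K_{\alpha,\alpha}$. Here I would feed the bipartite graph $G'[A_{xy},\,B'\setminus\{x,y\}]$ back into the argument, either with $\alpha$ decremented --- a $K_{\alpha-1,\alpha-1}$ found there, together with $\{x,y\}$, is a $K_{\alpha,\alpha}$ of $G'$ --- or, when the patch is dense enough relative to $|B'|$, with the same $\alpha$ but strictly fewer vertices on the small side, iterating until an induced $1$-subdivision of the required density or a $K_{\alpha,\alpha}$ emerges; the base case $\alpha\le2$ is immediate, since then a clustered pair already witnesses $K_{2,2}$. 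Making the parameters close --- arranging the surplus in the exponent so that the size ratio survives each restriction, and so that the subdivision finally produced has average degree at least $\beta^{9}/2^{14}$ --- is precisely where the hypothesis $|A'|\ge\beta^{12\alpha}|B'|$ (with $\alpha$ in the exponent) is spent, and is the most delicate part of the argument.
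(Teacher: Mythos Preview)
The paper does not prove this lemma at all: it is quoted as Corollary~19 of K\"uhn--Osthus~\cite{Kuhn_Osthus00} and used as a black box, so there is no in-paper argument to compare your sketch against.

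On the merits of your sketch itself, the strategy through the sampling step is reasonable and standard in spirit: the K\H{o}v\'ari--S\'os--Tur\'an style double count either produces $K_{\alpha,\alpha}$ or forces $|B'|$ to be large, and random thinning of $B'$ with probability $1/\beta$ is the natural way to manufacture degree-$2$ subdivision vertices. The genuine gap is exactly where you flag it, in ``the obstacle''. In the clustered case you obtain a pair $\{x,y\}$ with common neighbourhood $A_{xy}$ of size $\gtrsim\beta^{12\alpha-9}$ --- an \emph{absolute} lower bound --- but to feed $G'[A_{xy},B'\setminus\{x,y\}]$ back into the lemma with parameter $\alpha-1$ you would need the \emph{ratio} $|A_{xy}|/|B'\setminus\{x,y\}|\ge\beta^{12(\alpha-1)}$, i.e.\ $|B'|\lesssim\beta^{3}$. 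You have no upper bound on $|B'|$; in fact your own step forces $|B'|\gtrsim\beta^{12}$ in the $K_{\alpha,\alpha}$-free case, so the recursion hypothesis fails outright. Your alternative branch, ``same $\alpha$ but strictly fewer vertices on the small side'', only shaves two vertices off $B'$ per step and does not terminate either. There is also a small slip: a $K_{\alpha-1,\alpha-1}$ in $G'[A_{xy},B'\setminus\{x,y\}]$ together with $\{x,y\}$ gives $K_{\alpha-1,\alpha+1}$, not $K_{\alpha,\alpha}$; you would want a $K_{\alpha,\alpha-2}$ with the $\alpha$ side inside $A_{xy}$.

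The difficulty is structural, not cosmetic: each pass through the clustering step costs you a factor of roughly $\beta^{9}|B'|$ on the $A$-side while leaving $|B'|$ essentially unchanged, so no rebalancing of constants will make the induction close. K\"uhn and Osthus's actual argument avoids this trap by organising the construction differently; if you want to reconstruct a proof, you will need a mechanism that controls how many $A'$-vertices can land on the \emph{same} pair without invoking the full inductive statement again.
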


Finally, we will use the fact that every graph of large average degree contains a subdivision of a large complete graph. The following result was proved independently by Bollob\'as-Thomason \cite{BT98} and Koml\'os-Szemer\'edi \cite{KS96}\footnote{Technically speaking, the bound obtained in this paper has a worse constant factor in front of $h^2$.}. The latter of these was one of the earliest papers introducing the highly influential concept of sublinear expander graphs, used, for example, by Montgomery to essentially resolve the famous Ryser-Brualdi-Stein conjecture \cite{montgomery2023proof}.

\begin{lemma}\label{lem:degree-subdiv-Kk}
    For all $h \in \mathbb{N}$, if $G$ is a graph with average degree at least $256h^2$, then $G$ contains a subdivision of $K_h$.
\end{lemma}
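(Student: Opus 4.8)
The plan is to follow the classical route: first pass to a highly connected subgraph, then use that highly connected graphs are highly \emph{linked}, and finally route the paths of the subdivision between prescribed neighbourhoods of the branch vertices. Recall that a graph is \emph{$k$-linked} if it has at least $2k$ vertices and for every choice of distinct vertices $s_1,\dots,s_k,t_1,\dots,t_k$ there are pairwise vertex-disjoint paths $P_1,\dots,P_k$ with $P_i$ joining $s_i$ to $t_i$. I will use two classical inputs: (a) every graph of average degree at least $4t$ has a $t$-connected subgraph (a well-known theorem of Mader); and (b) every $22k$-connected graph is $k$-linked (Bollob\'as-Thomason \cite{BT98}). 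Since $d(G)\ge 256h^2=4\cdot 64h^2$, input (a) gives a $64h^2$-connected subgraph $G_1\subseteq G$. I make no attempt to optimise constants, which is precisely why the argument yields a (slightly) weaker bound than the one quoted in the statement.

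Next, pick arbitrary vertices $b_1,\dots,b_h\in V(G_1)$ to be the branch vertices of the desired $K_h$-subdivision. Each $b_i$ has at least $64h^2$ neighbours in $G_1$, so one can greedily choose, for every ordered pair $(i,j)$ with $i\neq j$, a neighbour $p_i^{(j)}$ of $b_i$ such that all of the $h(h-1)$ vertices $p_i^{(j)}$ together with all of $b_1,\dots,b_h$ are pairwise distinct: when a new port is chosen, the forbidden set has size at most $h+h(h-1)<2h^2<64h^2$, so a valid choice always remains. Now set $G_2:=G_1-\{b_1,\dots,b_h\}$; deleting $h$ vertices lowers connectivity by at most $h$, so $G_2$ is $(64h^2-h)$-connected, hence $22\binom{h}{2}$-connected since $22\binom{h}{2}=11h^2-11h\le 64h^2-h$, and it has more than $h(h-1)=2\binom{h}{2}$ vertices. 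By input (b), $G_2$ is $\binom{h}{2}$-linked, so we may join, by pairwise vertex-disjoint paths $Q_{ij}\subseteq G_2$, the $\binom{h}{2}$ pairs $p_i^{(j)},p_j^{(i)}$ over $\{i,j\}\in\binom{[h]}{2}$ (the relevant $2\binom{h}{2}$ endpoints being distinct by construction).

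Finally, for each pair $\{i,j\}$ let $P_{ij}$ be the path obtained from $Q_{ij}$ by appending the edges $b_ip_i^{(j)}$ and $p_j^{(i)}b_j$. The paths $Q_{ij}$ are pairwise vertex-disjoint and lie in $G_2$, hence avoid all branch vertices, and the ports $p_i^{(j)}$ are pairwise distinct; consequently the $P_{ij}$ are internally disjoint and meet exactly in the $b_i$ as dictated by the edge set of $K_h$. Thus $\{b_1,\dots,b_h\}$ together with the $P_{ij}$ is a subdivision of $K_h$ inside $G$. The only genuine content here is the linkage theorem (b); input (a) has a short self-contained proof, and the rest is bookkeeping. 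The single point that needs a little care is that the linkage paths must avoid the branch vertices, which is why $b_1,\dots,b_h$ are deleted before (b) is invoked, and why $64h^2$ is taken with room to spare above the $\sim 11h^2$ actually required. (Alternatively, one can replace (b) by the Koml\'os-Szemer\'edi sublinear-expander method, at the cost of a worse constant, as noted in the footnote.)
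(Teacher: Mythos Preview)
The paper does not give its own proof of this lemma; it merely quotes it as a classical result of Bollob\'as--Thomason \cite{BT98} and Koml\'os--Szemer\'edi \cite{KS96}. Your argument is correct and is precisely the Bollob\'as--Thomason route (pass to a highly connected subgraph via Mader, use their linkage theorem, then route paths between pre-selected ports after deleting the branch vertices), so it reproduces one of the two cited proofs rather than offering an alternative.
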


We are now ready to prove \cref{thm:poly-kuhn-osthus}, stating that $K_{s,s}$-subgraph-free graphs with no induced subdivision of a fixed graph $H$ have at most polynomial average degree (in terms of $s$).

\begin{proof}[Proof of \cref{thm:poly-kuhn-osthus}.] 
    By \cref{lem:kuhn-osthus-C4}, there exists a positive integer $k$ such that every $C_4$-subgraph-free graph not containing an induced subdivision of $H$ has average degree less than $k$. Let $\varepsilon=\varepsilon(k)$ and $P_k(s)$ be provided by \Cref{thm:bipartite-degree-boundedness}. 
    Let $p(s) \coloneqq |P_k(s)|+(2|H|s)^{5/\varepsilon}+C$, where $C$ is a constant depending only on $k$, chosen large enough to guarantee that $d^{\varepsilon} \ge \frac{3}{2} \varepsilon \log d$ whenever $d \geq C$.
    We claim that $p(s)$ satisfies the conditions of \cref{thm:poly-kuhn-osthus}.

    Let $s \geq 1$ be a fixed integer. 
    If $s=1$ the result is immediate so we may assume $s \ge 2$.
    Let $G$ be a $K_{s,s}$-subgraph-free graph containing no induced subdivision of $H$ that has average degree more than $p(s)$. Then, $G$ has maximum average degree $d > p(s)$.
    Since $G$ does not contain an induced $C_4$-subgraph-free subgraph with average degree at least $k$ by \cref{lem:kuhn-osthus-C4}, it follows from \cref{thm:bipartite-degree-boundedness} that $G$ contains an induced bipartite subgraph $G'=(A',B')$ with $|A'| \geq 2^{d^{2\varepsilon}}|B'|$ and $|N(a) \cap B'| = \lfloor d^\varepsilon/s^4 \rfloor$ for every $a \in A'$.

    Let $\alpha = \lfloor \lfloor d^\varepsilon/(s^4) \rfloor / 8 \rfloor \ge s$ and let $\beta = \lfloor d^\varepsilon/(s^4) \rfloor \geq 8 \alpha$.
    Note that by our choice of $C$ we have $\beta^{12\alpha} \le \left(d^{\varepsilon}\right)^{12d^{\varepsilon}/8} \le 2^{d^{2\varepsilon}}$.
    Since $G'$ is $K_{s,s}$-subgraph-free, by \cref{lem:kuhn-osthus}, $G'$ contains an induced 1-subdivision $F'$ of some graph $F$ with average degree 
    $d(F) \geq \beta^9/2^{14}\ge 256 |H|^2$, where in the last inequality we used $\beta \ge (2|H|s)^5/(2s^4) \ge 2^4|H|$. 
    By \cref{lem:degree-subdiv-Kk}, $F$ contains as a subgraph a subdivision of $K_{|H|}$. 
    In $F'$ (hence in $G'$ and thus in $G$), this corresponds to an induced proper subdivision of $K_{|H|}$. 
    Since every proper subdivision of $K_n$ contains an \emph{induced} subdivision of any $n$-vertex graph, this completes the proof. 
\end{proof}

Now, we prove \cref{thm:chi-bound}, i.e. that the class of graphs containing no induced subdivision of $K_{r,t}$ is polynomially $\chi$-bounded. In fact, we shall prove that the average degree of graphs containing no induced subdivision of $K_{r,t}$ is at most polynomial in their clique number, which clearly implies \cref{thm:chi-bound} since $d$-degenerate graphs are $(d+1)$-colorable.
This is a simple application of \cref{thm:poly-kuhn-osthus} and the classical bound on the Ramsey numbers by Erd\H{o}s and Szekeres~\cite{ES1935}.

\begin{theorem}
    Let $r,t$ be positive integers. Then there exists a polynomial $g$, such that the average degree of every graph $G$ containing no induced subdivision of $K_{r,t}$ is at most $g(\omega(G))$.
\end{theorem}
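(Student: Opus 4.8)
The plan is to bound the average degree of a graph $G$ with no induced subdivision of $K_{r,t}$ in terms of $\omega(G)$, by combining \cref{thm:poly-kuhn-osthus} with a Ramsey-type argument to control the $K_{s,s}$-subgraph parameter $s$. Let $p$ be the polynomial given by \cref{thm:poly-kuhn-osthus} applied with $H = K_{r,t}$, so that every $K_{s,s}$-subgraph-free graph with no induced subdivision of $K_{r,t}$ has average degree at most $p(s)$. The key observation is that if $G$ contains $K_{s,s}$ as a subgraph, then in particular $G$ contains $K_{2s}$ minus a perfect matching, or more simply, by a Ramsey argument inside one of the two sides, $G$ contains a large clique: indeed, by the Erd\H{o}s--Szekeres bound, $K_{s,s}$ (which has $2s$ vertices and whose complement is $K_s \sqcup K_s$) contains a clique or independent set of size roughly $\log s$, but the independent sets of $K_{s,s}$ have size at most $s$, so once $s$ is large $K_{s,s}$ itself is not the obstruction — instead I should argue on $G$ directly.

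More carefully, the clean approach is: set $s = R(\omega(G)+1, \cdot)$ appropriately. Concretely, let $\omega = \omega(G)$ and choose $s$ to be the Ramsey number $R(\omega+1, \omega+1)$ (or any explicit Erd\H{o}s--Szekeres upper bound $\binom{2\omega}{\omega} \le 4^{\omega}$ for it). I claim $G$ is $K_{s,s}$-subgraph-free: if $G$ contained a $K_{s,s}$ subgraph with parts $X, Y$ of size $s$, then since $s \ge R(\omega+1,\omega+1)$, the graph $G[X]$ contains either a clique of size $\omega+1$, contradicting $\omega(G) = \omega$, or an independent set $X'$ of size $\omega+1$; similarly $G[Y]$ contains an independent set $Y'$ of size $\omega+1$; but then $X' \cup Y'$ induces a complete bipartite graph $K_{\omega+1,\omega+1}$, which is itself an induced subdivision of $K_{r,t}$ once $\omega+1 \ge \max\{r,t\}$ (a copy of $K_{r,t}$ with each edge subdivided zero times is just $K_{r,t}$, and $K_{r,t} \subseteq K_{\omega+1,\omega+1}$ induced). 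Hmm — but $K_{r,t}$ as an induced subgraph of $K_{\omega+1,\omega+1}$ is genuinely induced, and $K_{r,t}$ itself is trivially a (degenerate, non-proper) subdivision of $K_{r,t}$; if the paper's convention requires a proper subdivision, one instead takes $\omega+1$ large enough that $K_{\omega+1,\omega+1}$ contains an induced subdivision of $K_{r,t}$ via longer paths, which it does once $\omega+1 \ge 2\max\{r,t\}$ or so, since $K_{n,n}$ for large $n$ contains induced subdivisions of any bipartite graph and in particular of $K_{r,t}$ with subdivided edges. Either way, for $\omega$ bounded below by a constant depending on $r,t$ we get a contradiction, and for the finitely many smaller values of $\omega$ the bound is absorbed into the constant term of $g$.

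So the argument runs: (1) fix $\omega = \omega(G)$; (2) if $\omega < \max\{r,t\}$ (or whatever finite threshold), the conclusion holds with a suitable constant absorbed into $g$ — actually we still need $G$ to have bounded degree here, which follows since such $G$ is $K_{s,s}$-free for $s = \omega+1 \le $ const and we apply \cref{thm:poly-kuhn-osthus}; (3) otherwise, set $s = 4^{\omega}$ (an upper bound for $R(\omega+1,\omega+1)$), and observe by the Ramsey argument above that $G$ is $K_{s,s}$-subgraph-free; (4) apply \cref{thm:poly-kuhn-osthus} to conclude $d(G) \le p(s) = p(4^{\omega})$. Finally set $g(x) = p(4^x)$ — but wait, this is exponential, not polynomial in $\omega$! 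That is the main obstacle, and it means I must choose $s$ polynomially in $\omega$, not exponentially. The fix is that I do not need the full strength of a Ramsey bound on both sides: it suffices that one side, say $G[X]$ with $|X| = s$, contains an independent set of size $r$ (not $\omega+1$) — no. Let me reconsider: the right choice is $s = R(\omega+1, \max\{r,t\})$, the Ramsey number, so that $G[X]$ contains either $K_{\omega+1}$ (contradiction) or an independent set of size $\max\{r,t\}$; doing this on both $X$ and $Y$ yields an induced $K_{\max\{r,t\},\max\{r,t\}} \supseteq K_{r,t}$ (induced), which is an induced subdivision of $K_{r,t}$ (degenerate subdivision), a contradiction. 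Now $R(\omega+1, c) \le \binom{\omega + c}{c} \le (\omega+c)^c = O(\omega^c)$ is \emph{polynomial} in $\omega$ for fixed $c = \max\{r,t\}$, by the Erd\H{o}s--Szekeres bound. Hence $s$ can be taken polynomial in $\omega(G)$, and then $g(x) := p\bigl((x+c)^{c}\bigr) + c_0$ is a polynomial in $x$ (composition of polynomials), where $c_0$ handles the degenerate low-$\omega$ cases and the convention issue about proper versus degenerate subdivisions. The main thing to get right is precisely this: ensuring the bound on $s$ is polynomial (via the asymmetric Erd\H{o}s--Szekeres bound $R(n, c) = O(n^{c})$) rather than exponential, and checking that an induced $K_{r,t}$ (equivalently a large induced balanced complete bipartite graph) indeed counts as, or contains, an induced subdivision of $K_{r,t}$ under the paper's definition — handling the alternative convention by instead building a genuine induced proper subdivision of $K_{r,t}$ inside a sufficiently large induced complete bipartite subgraph.
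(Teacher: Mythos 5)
Your final argument is essentially the paper's: choose $s$ via the asymmetric Erd\H{o}s--Szekeres bound $R(\omega+1,\max\{r,t\}) = O(\omega^{\max\{r,t\}})$, so any $K_{s,s}$-subgraph forces (by extracting stable sets of size $\max\{r,t\}$ from each side) an induced $K_{r,t}$, which is itself a subdivision of $K_{r,t}$ under the paper's definition; hence $G$ is $K_{s,s}$-subgraph-free and \cref{thm:poly-kuhn-osthus} applies with $s$ polynomial in $\omega(G)$. One side remark on your hedge: the fallback claim that $K_{n,n}$ for large $n$ contains an induced \emph{proper} subdivision of $K_{r,t}$ is false --- every induced subgraph of a complete bipartite graph is complete bipartite, so it has no induced cycle longer than $C_4$ and cannot contain a proper subdivision of $K_{r,t}$ (with $r,t \ge 2$) as an induced subgraph --- but this is moot, since the paper's ``subdivision'' allows replacing each edge by a path of length one, so the trivial subdivision suffices.
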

\begin{proof}
    Assume $s \leq t$. 
    By \cref{thm:poly-kuhn-osthus}, there exists a polynomial $p$ such that for every integer $x$ any graph $G$ that does not contain $K_{x, x}$ as a subgraph nor an induced subdivision of $K_{r, t}$ has average degree at most $p(r)$.
    
    Let $k \geq 1$ and set $x \coloneqq (k+t-1)^{t-1} \geq \binom{k+t-1}{t-1}$.
    Let $g(k) \coloneqq p(x)$. Then, $g$ is a polynomial in $k$. 
    Let $G$ be a graph with $\omega(G) = k$ that does not contain an induced subdivision of $K_{r, t}$.

    Suppose $G$ contains a $K_{x, x}$ as a subgraph and let $X, Y \subseteq V(G)$ be the corresponding vertex sets.
    By the bound on Ramsey numbers by Erd\H{o}s and Szekeres~\cite{ES1935}, every graph on $r$ vertices contains a clique of size $k+1$ or a stable set of size $t$.
    Since $\omega(G) = k$ it follows that $X$ and $Y$ both contain a stable set of size $t$.
    But then $G$ contains an induced $K_{r,t}$, a contradiction.
    Hence, $G$ contains no $K_{x, x}$, so by \cref{thm:poly-kuhn-osthus}, $G$ has average degree at most $p(x) = g(k)$. 
\end{proof}


\section*{Acknowledgments} 
The bulk of this work was done at the 2023 Structural Graph Theory workshop held at the IMPAN B\k{e}dlewo Conference Center (Poland) from September 27- October 2. We announced some of our results and methodology at the progress reports in this workshop and gratefully acknowledge the encouragement of the other participants, especially Am\'{a}lka Masa\v{r}ikova.   
This workshop was a part of STRUG: Stuctural Graph Theory Bootcamp, funded by the ``Excellence initiative – research university (2020-2026)" of University of Warsaw.
Some of our initial discussions about this problem were held at the MATRIX-IBS Workshop: Structural Graph Theory Downunder III in April, 2023.
We gratefully acknowledge the support from MATRIX, IBS and University of Warsaw for making this work possible.

The first author would like to thank Marcin Pilipczuk for inviting him and covering his travel expenses for the 2023 Structural Graph Theory workshop. The second author gratefully acknowledges the support of the Oswald Veblen Fund. 
The third author was supported by the Institute for Basic Science (IBS-R029-C1).

We are also grateful to the anonymous referees for their suggestions on improving the presentation.

\bibliographystyle{amsplain}



\begin{aicauthors}
\begin{authorinfo}[rb]
 
  Romain Bourneuf\\
  LaBRI, Université de Bordeaux\\
  Bordeaux, France \\
  \url{https://perso.ens-lyon.fr/romain.bourneuf/} \\
\end{authorinfo}
\begin{authorinfo}[mb]
  Matija Buci\'c\\
  Princeton University\\
  Princeton, USA\\
  mb5225\imageat{}princeton\imagedot{}edu \\
\url{https://sites.google.com/princeton.edu/matija-bucic}
\end{authorinfo}
\begin{authorinfo}[lc]
  Linda Cook\\
  Institute for Basic Science\\ 
  Daejeon, South~Korea\\
  \url{https://dimag.ibs.re.kr/home/cook/}\\
  Current affiliation: KdVI, University of Amsterdam, The Netherlands 
\end{authorinfo}
\begin{authorinfo}[jd]
  James Davies\\
  University of Cambridge\\ 
  Cambridge, UK\\  
  \url{https://sites.google.com/view/davies-james/}\\
\end{authorinfo}
\end{aicauthors}

\end{document}